\documentclass[onefignum,onetabnum]{siamonline250211}

\usepackage[T1]{fontenc}
\usepackage{amsmath, amssymb, mathtools}
\usepackage{amsfonts}
\usepackage{graphicx}
\usepackage{epstopdf}
\usepackage{algorithmic}
\ifpdf
  \DeclareGraphicsExtensions{.eps,.pdf,.png,.jpg}
\else
  \DeclareGraphicsExtensions{.eps}
\fi

\usepackage{enumitem}
\setlist[enumerate]{leftmargin=.5in}
\setlist[itemize]{leftmargin=.5in}

\newsiamremark{remark}{Remark}
\newsiamremark{hypothesis}{Hypothesis}
\crefname{hypothesis}{Hypothesis}{Hypotheses}
\newsiamthm{claim}{Claim}
\newsiamremark{fact}{Fact}
\crefname{fact}{Fact}{Facts}

\newcommand{\R}{\mathbb{R}}

\newcommand{\Sph}[1]{\mathbb{S}^{#1}}

\newcommand{\E}{\mathbb{E}}
\newcommand{\FvML}{\mathrm{FvML}}
\newcommand{\dd}{\mathop{}\!\mathrm{d}}
\newcommand{\sigmaN}{\sigma^{\otimes N}}
\newcommand{\samples}{(\Sph{n-1})^{N}}

\DeclarePairedDelimiter{\norm}{\lVert}{\rVert}
\DeclarePairedDelimiter{\abs}{\lvert}{\rvert}

\headers{Unbiased estimation of squared concentration}{Z. Jabbar, Y. Jiang, and A. A. Popov}

\title{Unbiased estimation of squared concentration in the Fisher-von~Mises–Langevin distribution and the impossibility of unbiased concentration\thanks{Submitted to the editors May 2026.
\funding{This work was funded by startup funds from the University of Hawaii.}}}

\author{Zain Jabbar\thanks{Department of Mathematics, University of Hawaii at Manoa, Honolulu, HI 
  (\email{zjabbar@hawaii.edu}).}
\and Yuqin Jiang\thanks{Department of Geography and Environment, University of Hawaii at Manoa, Honolulu, HI 
  (\email{yuqinj@hawaii.edu}).}
\and Andrey A. Popov\thanks{Department of Information and Computer Sciences, University of Hawaii at Manoa, Honolulu, HI 
  (\email{apopov@hawaii.edu}).}}

\usepackage{amsopn}

\usepackage[dvipsnames]{xcolor}
\usepackage{pgfplots}
\pgfplotsset{compat=1.18}

\pgfplotsset{clean/.style={axis lines*=left,
        axis on top=true,
        axis x line shift=0.0em,
        axis y line shift=0.75em,
        every tick/.style={black, thick},
        axis line style = ultra thick,
        tick align=outside,
        clip=false,
        x=0.05cm,
        y=1.7cm,
        major tick length=4pt}}

\definecolor{tolblue}{rgb}{0.2667,0.4667,0.6667}
\definecolor{tolred}{rgb}{0.9333,0.4000,0.4667}
\definecolor{tolyellow}{rgb}{0.8000,0.7333,0.2667}
\definecolor{tolcyan}{rgb}{0.4000,0.8000,0.9333}
\definecolor{tolpurple}{rgb}{0.6667,0.2000,0.4667}
\definecolor{tolgreen}{rgb}{0.1333,0.5333,0.2000}

\usepackage[listings,skins,breakable]{tcolorbox}
{%
\end{tcolorbox}\endgroup}

{%
\end{tcolorbox}\endgroup}

{%
\end{tcolorbox}\endgroup}

\ifpdf
\hypersetup{
  pdftitle={Unbiased Partial-Sum Estimation for the Squared FvML Concentration},
  pdfauthor={Zain Jabbar, Yuqin Jiang, and Andrey A. Popov}
}
\fi

\begin{document}

\maketitle

\begin{abstract}
    The estimation of concentration parameter in Fisher-von~Mises Langevin distribution is the directional statistics analogue of the estimation of the precision matrix for the Gaussian distribution. 
    In this work we show that unbiased estimation of this parameter is impossible.
    With this realization in hand, we provide an alternative parameterization of the Fisher-von~Mises Langevin distribution in terms of the squared concentration, which we term the intensity.
    We fruther show that unbiased estimation of thereof is possible, and provide (almost) unbiased estimators thereof in terms of a partial sum U-statistic.
    We showcase our new estimator on synthetic data, New York taxi trip data, and on spherical word embeddings.
\end{abstract}

\begin{abstract}
% We study which functions of the concentration parameter $\kappa$ of
% the Fisher--von Mises--Langevin (FvML) distribution on the sphere
% $\mathbb{S}^{n-1}$ admit unbiased estimation.
% We prove that $\kappa$ itself does not, for any $n\ge1$ and $N\ge1$,
% and obtain a necessary condition: every unbiasedly estimable function
% of $\kappa$ must be real-analytic in $\kappa^2$.

% For $n\ge2$, $M\ge1$, and $N\ge2M$, we construct a U-statistic
% $\hat\zeta_M$ that is exactly unbiased for the finite partial sum
% $\zeta_M(\kappa)=\sum_{\ell=1}^M c_\ell A_n(\kappa)^{2\ell}$.
% This is the $M$-term truncation of the convergent inverse Bessel-ratio
% series $\kappa^2=\sum_{\ell\ge1}c_\ell A_n(\kappa)^{2\ell}$ on its
% convergence interval.
% Empirical examples separate truncation error (synthetic),
% bootstrap stability (NYC taxi flows \cite{taxi2020taxi}), and
% high-dimensional descriptive use (JoSE word embeddings
% \cite{meng2019spherical}).
\end{abstract}

\begin{keywords}
Fisher--von Mises--Langevin, directional statistics,
unbiased estimation, concentration parameter, U-statistics
\end{keywords}

\begin{MSCcodes}
62F10, 62H11, 62F30
\end{MSCcodes}

\section{Introduction}
\label{sec:introduction}

Directional data arise in a wide range of scientific and applied domains, including classical examples such as geological fault plane analysis, 
\cite{fisher1953dispersion}, 
urban mobility~\cite{jiang2024entropy}, 
arrival times~\cite[\S1.1]{mardia2009directional},
attitude estimation~\cite{shuster1993survey},  word embeddings~\cite{meng2019spherical}, and is important for filtering applications~\cite{kurz2019directional, kurz2016unscented}.
In each of these cases the dominant statistical question is the same: what is the mean tendency of the directions, and how widely do the data spread therefrom.
These data can be represented on the hypersphere $\Sph{n-1}\subset\R^n$, with $n$ representing the underlying dimension in which the data reside, from $n=2$ in the case of planar directions, to $n=4$ for quaternions, to $n=100$ or more for machine learning applications.
 % \footnote{Code, data caches, and figure/table-regeneration scripts: \url{https://github.com/Zaijab/fvml_concentration_estimation}.}

The Fisher–von Mises–Langevin (FvML) distributions provides the canonical parametric model for such data. 
The FvML is parametrized by a mean direction $\mu\in\Sph{n-1}$, which can be viewed as a unit vector describing both the expected and most likely tendency of the data, and by a concentration parameter $\kappa>0$ which describes the spread of the data around this mean.
As $\kappa\to0$ the model approaches the uniform distribution on $\Sph{n-1}$, meaning that the data do not follow the mean direction at all.
Conversely as $\kappa$ increases, mass concentrates
around $\mu$, and the data tends to only point in same direction as the mean.
The concentration parameter $\kappa$ is the sole parameter controlling spread, playing a role analogous to (and derived from) precision in the multivariate Gaussian distribution.
we study which functions of $\kappa$ admit unbiased estimators.
Accurate inference on this concentration parameter $\kappa$ is therefore critical to inference tasks such as comparing directional patterns across groups, quantifying uncertainty in directional estimates, and calibrating directional models.
This work is focused on the following question: do unbiased estimators for $\kappa$ exist, and if not, what alternative parameterization of the FvML admits unbiased estimators?

Classical estimators target the concentration parameter $\kappa$ directly.  
The canonical method for which is the maximum-likelihood estimator (MLE)~\cite{fisher1953dispersion}, which is a consistent estimator.
For $n=2,3$ a closed-form expression for its finite-sample bias is
known \cite{bestfisher1981}, and for general $n$ the leading $O(N^{-1})$ bias term is known to be positive~\cite{tanabe2007parameter}.
Whether this bias is intrinsic to the inferential problem or an
artifact of the MLE has remained open.
This work settles this question: no unbiased estimator of $\kappa$ exists, for any dimension $n\ge1$ and any sample size $N\ge1$.
We prove that every unbiasedly estimable function of $\kappa$ must be real-analytic in $\kappa^2$, meaning that this obstacle is not specific to the MLE: \textbf{direct unbiased estimation of the concentration $\kappa$ is impossible}.

Armed with this realization, this work aims to provide unbiased estimators for a different parameterization of the FvML in terms of what we dub the intensity, which we take to be squared concentration. 
We construct a U-statistic that converges to an unbiased estimator of the intensity provided that other ancillary statistics can be computed from the data, and provide a numerical implementation thereof.

This work is organized as follows: \cref{sec:background} provides some background on the FvML, related Bessel-ratio
identities, sufficient statistic, and related estimators.
Next, \cref{sec:impossibility} proves the impossibility of unbiased estimation of $\kappa$.
\Cref{sec:estimators} constructs our partial sum U-statistic estimators and expresses the resulting truncation error.  
The next three sections represent empirical experiments then separate: synthetic data in~\cref{sec:synthetic}, NYC taxi flows in 
\cref{sec:taxi}, and word embeddings in \cref{sec:jose}.
Finally we end with some conclusions in~\cref{sec:conclusion}.

\section{Background}\label{sec:background}

This section provides background on the Fisher-von~Mises-Langevin distribution, sufficient statistics thereof, and the maximum likelihood estimator and $U$-statistics.

Throughout this section we assume that the manifold of interest is the sphere $\mathbb{S}^{n-1}$ embedded in $n$-dimensional Euclidean space $\mathbb{R}^n$. 
We additionally assume that $n \geq 2$, meaning that the we are either dealing with the circle or some higher-dimensional sphere.
The degenerate case of $n=1$ collapses to the sphere $\Sph{0}$ which is precisely the set $\{-1,+1\}$, and collapses the FvML family of distributions to a two-point exponential family, and is only briefly discussed.

\subsection{The Fisher-von~Mises-Langevin distribution}
\label{sec:fvml}

We now define the Fisher-von~Mises-Langevin distribution and properties thereof.

Let $\Sph{n-1}=\{x\in\R^n:\norm{x}=1\}$ and $\sigma$ the surface
measure on $\Sph{n-1}$ (the $(n-1)$-dimensional Hausdorff measure).
The \emph{Fisher-von~Mises-Langevin} (FvML) distribution
$\FvML(\mu,\kappa)$ with mean direction $\mu\in\Sph{n-1}$ and
concentration parameter $\kappa>0$ has $\sigma$-density,
\begin{equation}
  \label{eq:fvml-density}
  f(x;\mu,\kappa) = C_n(\kappa)\,e^{\kappa\mu^\top x},
  \quad
  C_n(\kappa) = \frac{\kappa^{n/2-1}}{(2\pi)^{n/2}\,I_{n/2-1}(\kappa)},
  \quad \kappa>0,
\end{equation}
where $I_\nu$ is the modified Bessel function of the first kind~\cite{NIST:DLMF}, defined by,
\begin{equation}
    I_\nu(\kappa)=\sum_{m=0}^\infty \frac{(\kappa/2)^{\nu+2m}}{m!\,\Gamma(\nu+m+1)}.
\end{equation}
The mean direction $\mu$ of the distribution defines both the mode of the distribution, and its Fr\'echet mean, meaning that data distributed as FvML are concentrated around this direction. 
The scalar concentration parameter $\kappa$ defines how concentrated the data are around the mean direction $\mu$.
A concentration parameter $\kappa$ of zero means that the data are essentially uniformly distributed around the sphere and are not at all concentrated around the mean direction $\mu$.
Conversely if the concentration parameter tends towards infinity, that means that the data are pointed solely in the direction of the mean direction $\mu$.
The estimation and analysis of the concentration $\kappa$ is the focus of this work.

The normalizing constant $C_n(\kappa)$ is dependent on the concentration $\kappa$ but is independent of the mean direction $\mu$ because $\sigma$ is invariant under the orthogonal group $O(n)$ \cite[p.~36]{mardia2009directional}.
The law extends continuously to $\kappa=0$, with,
\begin{equation}
    C^{-1}_n(0)=\sigma(\Sph{n-1}) = \frac{2\pi^{n/2}}{\Gamma(n/2)},
\end{equation}
recovering the uniform distribution.  
In this degenerate case the density is independent of $\mu$, so the mean direction is identifiable only for $\kappa>0$.
For $n=2$ this is known as the von~Mises distribution on the circle $\Sph{1}$ and for $n\ge3$ it is known as the von~Mises-Fisher distribution.

\subsection{The function \texorpdfstring{$A_n$}{A\_n}}
\label{sec:An}

It is known~\cite{fisher1953dispersion} that if $X$ is FvML distributed, then,
\begin{equation}\label{eq:An-to-EnormX}
    A_n(\kappa) = \mathbb{E}\left[\left\lVert X\right\rVert\right],
\end{equation}
where the function $A_n$ is the ratio of modified Bessel functions,
\begin{equation}
  \label{eq:An}
  A_n(\kappa) \coloneqq  \frac{I_{n/2}(\kappa)}{I_{n/2-1}(\kappa)},
  \qquad \kappa>0,
\end{equation}
thus the function $A_n$ has been studied for its properties in estimating the concentration parameter $\kappa$.

For any $\nu\ge0$, $\kappa^{-\nu}I_\nu(\kappa)$ extends to an entire function of $\kappa$ \cite[\S10.25(ii)]{NIST:DLMF}.
Explicitly writing out,
\begin{equation}
    A_n(\kappa)=\kappa\,\frac{\kappa^{-n/2}I_{n/2}(\kappa)}{
\kappa^{-(n/2-1)}I_{n/2-1}(\kappa)},
\end{equation}
exhibits the denominator as an entire function whose power series in $\kappa^2$ has positive coefficients $1/(m!\,2^{n/2-1+2m}\Gamma(n/2+m))$, hence no real zeros.
The function $A_n$ therefore extends to a real-analytic function on $\R$ with $A_n(0)=0$.
We now present some useful properties of $A_n$.

\begin{proposition}[Properties of $A_n$]
\label{prop:An-properties}
For $n\ge2$ the function $A_n$ satisfies:
\begin{enumerate}[label=(\roman*)]
  \item $A_n(\kappa) = \mathbb{E}_{\mu,\kappa}[\mu^\top X]$,
  the \emph{population mean resultant length}.  By the
  $O(n)$-equivariance of the FvML family,
  $\mathbb{E}_{\mu,\kappa}[X]=A_n(\kappa)\mu$.
  \item $A_n\colon(0,\infty)\to(0,1)$ is strictly increasing, with
  $A_n(\kappa)\to0$ as $\kappa\to0^+$ and $A_n(\kappa)\to1$ as
  $\kappa\to\infty$.
  \item The power series of $A_n$ has only odd powers:
  \begin{equation}
    \label{eq:An-series}
    A_n(\kappa) = \sum_{j=0}^\infty a_j\,\kappa^{2j+1},
    \quad
    a_0 = \frac{1}{n},
    \quad
    a_j = \frac{-1}{n+2j}\sum_{k=0}^{j-1}a_k\,a_{j-1-k}
    \quad j\ge1,
  \end{equation}
  converging for $|\kappa|<R_n$.  Since consecutive Bessel functions
  share no nonzero zeros, $R_n$ equals the modulus of the nearest
  complex zero of $I_{n/2-1}$.
  \item The coefficients alternate in sign: $(-1)^j a_j>0$ for all
  $j\ge0$.
\end{enumerate}
\end{proposition}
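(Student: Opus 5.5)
We will treat the four items separately. The main tools are the exponential-family structure of \eqref{eq:fvml-density} and the Riccati equation satisfied by $A_n$.

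\textbf{Item (i).} Write $\nu=n/2-1$ and $\Phi(\kappa)=\log C_n(\kappa)^{-1}=\log I_\nu(\kappa)-\nu\log\kappa+\text{const}$. Differentiating $\int e^{\kappa\mu^\top x}\dd\sigma(x)=C_n(\kappa)^{-1}$ under the integral sign gives $\E[\mu^\top X]=\Phi'(\kappa)=I_\nu'(\kappa)/I_\nu(\kappa)-\nu/\kappa$. The recurrence $I_\nu'(\kappa)=I_{\nu+1}(\kappa)+(\nu/\kappa)I_\nu(\kappa)$ \cite[\S10.29]{NIST:DLMF} then yields $\Phi'=A_n$. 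For the vector identity, let $Q\in O(n)$ fix $\mu$. Invariance of $\sigma$ shows $QX\sim X$, so $Q\,\E[X]=\E[X]$ for every such $Q$. Hence $\E[X]$ is parallel to $\mu$, and its component along $\mu$ is $A_n(\kappa)$.

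\textbf{Item (ii).} Differentiate once more: $A_n'(\kappa)=\Phi''(\kappa)=\mathrm{Var}_{\kappa}(\mu^\top X)$. For $n\ge2$ the variable $\mu^\top X$ is non-degenerate, so this variance is strictly positive and $A_n$ is strictly increasing. We have $|\mu^\top X|\le1$, and equality holds only on a $\sigma$-null set, so $0\le A_n<1$. Combined with $A_n(0)=0$ and strict increase, this gives $A_n(\kappa)\in(0,1)$ for $\kappa>0$. The limit at $0^+$ follows from the continuity established above. For $\kappa\to\infty$, use the standard large-argument asymptotics $I_\nu(\kappa)\sim e^\kappa/\sqrt{2\pi\kappa}$, which holds for every $\nu$, so the ratio tends to $1$. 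Alternatively, a Laplace-method argument shows $\mu^\top X\to1$ in probability.

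\textbf{Item (iii).} From the two recurrences $I_\nu'=I_{\nu+1}+(\nu/\kappa)I_\nu$ and $I_{\nu+1}'=I_\nu-((\nu+1)/\kappa)I_{\nu+1}$, we derive the Riccati equation
\begin{equation*}
A_n'(\kappa)=1-\frac{n-1}{\kappa}A_n(\kappa)-A_n(\kappa)^2 .
\end{equation*}
The displayed factorization of $A_n$ shows that $A_n(\kappa)/\kappa$ is an even function, analytic near $0$. So we may write $A_n=\sum_j a_j\kappa^{2j+1}$. Substituting into the Riccati equation and matching the coefficient of $\kappa^{2j}$ gives $(2j+1)a_j+(n-1)a_j=\delta_{j0}-\sum_{k=0}^{j-1}a_ka_{j-1-k}$. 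For $j=0$ this yields $a_0=1/n$, and for $j\ge1$ it is exactly the recursion \eqref{eq:An-series}. For the radius of convergence, $A_n(\kappa)/\kappa$ is a ratio of entire functions, so it is meromorphic. Its poles lie at the zeros of $\kappa^{-\nu}I_\nu$ that are not cancelled by zeros of the numerator. Consecutive $I_\nu, I_{\nu+1}$ share no nonzero zeros, which follows from the Wronskian-type recurrence: a common zero would force $I_\nu'=0$ there as well, and hence $I_\nu\equiv0$ by ODE uniqueness. Therefore every nonzero zero of $I_\nu$ is a genuine pole. By Cauchy–Hadamard, $R_n$ is the modulus of the nearest such zero.

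\textbf{Item (iv).} This is the main obstacle, because the recursion alone does not make the signs obvious. Set $b_j=(-1)^ja_j$. The recursion becomes $b_j=\frac{1}{n+2j}\sum_{k=0}^{j-1}b_kb_{j-1-k}$, since $(-1)^j\cdot(-1)\cdot(-1)^{j-1}=1$. Now $b_0=1/n>0$, and strong induction gives $b_j>0$ for all $j$, because each $b_j$ is a positive multiple of a sum of products of positive terms. This sign bookkeeping is the only delicate point. The substantive analytic work is in (iii): justifying the Riccati equation and the zero-interlacing claim that pins down $R_n$.
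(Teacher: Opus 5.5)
Your proposal is correct and follows essentially the same route as the paper: you differentiate the log-normalizer for (i)--(ii), derive the Riccati equation $\kappa A_n'=\kappa-(n-1)A_n-\kappa A_n^2$ from the Bessel recurrences and match coefficients for (iii), and run a sign induction for (iv), where your substitution $b_j=(-1)^ja_j$ is the same bookkeeping. You also add two justifications the paper only asserts: the no-common-zeros claim via ODE uniqueness, and the direction of $\mathbb{E}[X]$ via the stabilizer of $\mu$. One small slip is that $|\mu^\top X|\le 1$ (with $\mu^\top X<1$ a.s., your substitute for the paper's strict Jensen) yields only $A_n<1$, not $A_n\ge 0$; positivity still follows from $A_n(0)=0$ and strict monotonicity, as your next sentence says.
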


\begin{proof}
Property~(i): differentiating $\log C_n(\kappa)^{-1}$ gives
$A_n(\kappa)=\mathbb{E}_{\mu,\kappa}[\mu^\top X]$; see
\cite[p.~86]{mardia2009directional}.  The identity
$\mathbb{E}[X]=A_n(\kappa)\mu$ follows from the $O(n)$-equivariance of
the family.

Property~(ii): differentiating the log-partition function
$\log C_n(\kappa)^{-1}$ twice gives
$A_n'(\kappa)=\mathrm{Var}_{\mu,\kappa}(\mu^\top X)$; this variance
is strictly positive because $\mu^\top X$ is non-degenerate for
$n\ge2$.  So $A_n$ is strictly increasing on $(0,\infty)$.
The limit $A_n(\kappa)\to0$ as $\kappa\to0^+$ follows from
$A_n(0)=0$; the limit $A_n(\kappa)\to1$ as $\kappa\to\infty$
follows from the Bessel asymptotic
$I_\nu(\kappa)\sim e^\kappa/\sqrt{2\pi\kappa}$
\cite[\href{https://dlmf.nist.gov/10.30.E1}{Eq.~10.30.1}]{NIST:DLMF};
and $A_n(\kappa)<1$ because
$\|{\mathbb{E}[X]}\|=A_n(\kappa)<\mathbb{E}[\|X\|]=1$ by strict Jensen.

Property~(iii): the Bessel recurrence
$I_\nu'(\kappa)=I_{\nu-1}(\kappa)-\frac{\nu}{\kappa}I_\nu(\kappa)$
\cite[\href{https://dlmf.nist.gov/10.29.E1}{Eq.~10.29.1}]{NIST:DLMF}
yields the ODE (a Riccati equation, i.e.\ first-order and quadratic
in $A_n$)
\begin{equation}
  \label{eq:riccati}
  \kappa A_n' = \kappa - (n-1)A_n - \kappa A_n^2.
\end{equation}
Substituting $A_n(\kappa)=\sum_{j\ge0}a_j\kappa^{2j+1}$ and matching
coefficients of $\kappa^{2j+1}$ gives the recursion
in~\eqref{eq:An-series}.

Property~(iv):  We prove $(-1)^j a_j>0$ for all $j\ge0$
by induction. 
Define,
\begin{equation}
    S_j\coloneqq \sum_{k=0}^{j-1}a_k\,a_{j-1-k},
\end{equation}
so that $a_j=-S_j/(n+2j)$. 
In the base case: $a_0=1/n>0$.
For ($j\ge1$): assume $(-1)^k a_k>0$ for $0\le k\le j-1$.
Each term $a_k\,a_{j-1-k}$ has sign $(-1)^k\cdot(-1)^{j-1-k}=(-1)^{j-1}$, so every summand in $S_j$
carries the same sign.  Hence $\operatorname{sign}(S_j)=(-1)^{j-1}$ and $\operatorname{sign}(a_j)=-(-1)^{j-1}=(-1)^j$.
\end{proof}
The first several $a_j$ are recorded in \cref{tab:as}.

\begin{table}
  \centering
  \small
  \setlength{\tabcolsep}{4pt}
  \begin{tabular}{c||llll}
    \hline
    $j$ & 0 & 1 & 2 \\\hline
    $a_j$ & $\dfrac{1}{n}$ & $\dfrac{-1}{n^2(n+2)}$ & $\dfrac{2}{n^3(n+2)(n+4)}$\\
    \hline
  \end{tabular}
  \caption{First coefficients $a_j$ in the series of $A_n(\kappa)$.}
  \label{tab:as}
\end{table}

\subsection{Sufficient statistic and exponential family structure}
\label{sec:exp-family}

The FvML family is a natural exponential family with natural
parameter $\eta=\kappa\mu\in\R^n\setminus\{0\}$; the map
$(\mu,\kappa)\mapsto\eta$ is a bijection of
$\Sph{n-1}\times(0,\infty)$ onto $\R^n\setminus\{0\}$ with its inverse defined by $\kappa=\norm{\eta}$, and $\mu=\eta/\norm{\eta}$.

%Let $\sigmaN\coloneqq \sigma^{\otimes N}$ on $\samples$.

We now look at the joint distribution of a collection of $N$ samples from the FvML distribution.
The joint $\sigmaN$-density of $N$ i.i.d.\ observations
$X_1,\ldots,X_N\sim\FvML(\mu,\kappa)$ is defined as,
\begin{equation}
  \label{eq:joint-density}
  \frac{\dd P_{\mu,\kappa}^{\otimes N}}{\dd\sigmaN}(x)
  = C_n(\kappa)^N \exp\!\bigl(\kappa\mu^\top T\bigr),
  \qquad T \coloneqq  \sum_{i=1}^N X_i,
\end{equation}
where we now show the sum $T$ is a sufficient statistic for the FvML.

\begin{proposition}[Sufficiency and completeness of $T$]
\label{prop:T-complete}
The statistic $T=\sum_{i=1}^N X_i$ is sufficient and complete for $(\mu,\kappa)\in\Sph{n-1}\times(0,\infty)$.
\end{proposition}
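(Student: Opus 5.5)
Sufficiency follows from the Fisher--Neyman factorization applied to the joint density \eqref{eq:joint-density}. The density with respect to $\sigmaN$ is $C_n(\kappa)^N\exp(\kappa\mu^\top T)$. This is a function of the parameter and of $T(x)$ only, times the constant function $h(x)\equiv 1$. Hence $T$ is sufficient for $(\mu,\kappa)$.

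For completeness, I will use the natural parametrization $\eta=\kappa\mu$ from the bijection noted above. The family $\{P_{\mu,\kappa}^{\otimes N}\}$ is then the $n$-parameter exponential family with density $\exp(\eta^\top T - N\psi(\|\eta\|))$, where $\psi(\kappa)=-\log C_n(\kappa)$. The natural parameter space is $\R^n\setminus\{0\}$, which contains a nonempty open set of $\R^n$. The classical completeness theorem for full-rank exponential families (Lehmann--Romano, Thm.~4.3.1) then gives completeness of $T$. The one hypothesis to verify is full rank: the components of $T$ must satisfy no affine relation $\sigmaN$-a.e. This holds because $\Sph{n-1}$ is not contained in any affine hyperplane. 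So for any $v\neq0$, $v^\top X_1$ is non-degenerate under $\sigma$, and therefore $v^\top T$ is non-degenerate.

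To be self-contained, I would sketch the direct argument as well. Suppose $\E_{\eta}[g(T)]=0$ for all $\eta\neq0$. Let $\nu$ be the pushforward of $\sigmaN$ under $T$, a finite measure supported on the ball of radius $N$. Then $\int g(t)e^{\eta^\top t}\,\dd\nu(t)=0$ for all $\eta\neq0$. By continuity this extends to $\eta=0$, since $g(T)$ is integrable and $T$ is bounded, so dominated convergence applies. Write $g=g^+-g^-$. The finite measures $g^\pm\dd\nu$ then have equal Laplace transforms on all of $\R^n$, and these transforms are entire because the support is compact. Restricting to imaginary arguments gives equal Fourier transforms, so $g^+\dd\nu=g^-\dd\nu$, and hence $g=0$ $\nu$-a.e. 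This means $g(T)=0$ a.s. under every $P_{\mu,\kappa}^{\otimes N}$, since all of them are mutually absolutely continuous with $\sigmaN$.

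The main subtlety is that the parameter space excludes $\eta=0$, i.e.\ $\kappa=0$. It is handled either by the open-set criterion or by the continuity extension above. Integrability of $g(T)$ under one parameter value implies integrability under all of them, because the densities are bounded above and below on the compact sample space. The rest is routine.
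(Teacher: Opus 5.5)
Your proof is correct and follows essentially the paper's route: Neyman--Fisher factorization for sufficiency, and the standard exponential-family completeness theorem for completeness. The only difference is that you apply the theorem directly on $\R^n\setminus\{0\}$, which already contains an open rectangle, and you make the continuity-at-$\eta=0$ step explicit. The paper instead first extends the family to $\eta=0$ and then simply asserts that completeness passes back to the original parameter set; that step actually needs your observation, since completeness does not in general descend to subfamilies.
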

\begin{proof}
Sufficiency is the Neyman-Fisher factorization applied to~\eqref{eq:joint-density}.
For completeness, \eqref{eq:joint-density} extends continuously to $\eta=0$ (the uniform law on $\Sph{n-1}$), giving a natural exponential family with sufficient statistic $T$ and full natural parameter space $\R^n$, which contains an open ball; the standard completeness theorem \cite[Thm.~1.6.22]{lehmann1998theory} then gives completeness of $T$ on this extended family, hence on the
original parameter set.
\end{proof}

Instead of the mean direction $\mu$ and the concentration $\kappa$ it can be useful to think of the joint density~\cref{eq:joint-density} in an alternative parameterization in terms of the mean resultant length $\bar{R}$ and sample mean direction $\hat\mu$, defined by,
\begin{equation}
    \bar{R}\coloneqq\frac{\lvert T\rvert}{N}, \quad \hat\mu\coloneqq \frac{T}{\lvert T\rvert},
\end{equation}
where for $n\ge2$ and $\kappa>0$, $T\ne0$ almost surely, so $\hat\mu$ is
well-defined almost surely.

\begin{corollary}[Sufficiency of $(\bar{R},\hat\mu)$]
    Without proof, the pair $(\bar{R},\hat\mu)$ is also sufficient.
\end{corollary}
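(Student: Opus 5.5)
The plan is to show that $(\bar{R},\hat\mu)$ and $T$ determine each other through a measurable bijection, defined on the event $\{T\ne0\}$, which has full probability. Sufficiency then carries over from \cref{prop:T-complete}. Concretely, I will define $\phi\colon\R^n\setminus\{0\}\to(0,\infty)\times\Sph{n-1}$ by $\phi(t)=(\lvert t\rvert/N,\,t/\lvert t\rvert)$. This map is continuous, hence Borel. Its inverse is $\phi^{-1}(r,u)=Nru$, which is also continuous. Consequently
\begin{equation*}
  T = N\bar{R}\,\hat\mu \quad\text{almost surely,}
\end{equation*}
so $T$ and $(\bar{R},\hat\mu)$ generate the same $\sigma$-algebra up to null sets.

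The most direct route is to apply the Neyman--Fisher factorization again, now in terms of $(\bar{R},\hat\mu)$. Substituting $T=N\bar{R}\hat\mu$ into \eqref{eq:joint-density} gives
\begin{equation*}
  \frac{\dd P_{\mu,\kappa}^{\otimes N}}{\dd\sigmaN}(x) = C_n(\kappa)^N\exp\!\bigl(\kappa N\bar{R}\,\mu^\top\hat\mu\bigr).
\end{equation*}
This holds for $\sigmaN$-almost every $x$. The right-hand side has the form $g_{\mu,\kappa}(\bar{R}(x),\hat\mu(x))\cdot h(x)$ with $h\equiv1$, so the factorization theorem yields sufficiency. Alternatively, one can argue abstractly: a statistic from which a sufficient statistic can be recovered by a measurable map is itself sufficient.

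The only delicate point is the null set $\{T=0\}$, on which $\hat\mu$ is undefined. The paper already records that $T\ne0$ almost surely for $n\ge2$ and $\kappa>0$. I would double-check this claim by noting that $\{x:\sum_i x_i=0\}$ is $\sigmaN$-null. For $N\ge2$ this holds because, once $x_1,\dots,x_{N-1}$ are fixed, it forces $x_N$ to equal a single point, and points are $\sigma$-null when $n\ge2$. For $N=1$ the set is empty. I would then assign $\hat\mu$ an arbitrary fixed value on this set, which affects neither the factorization nor the sufficiency conclusion, since sufficiency only needs to hold up to $P_{\mu,\kappa}$-null sets that are uniform in the parameters. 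I do not expect any real obstacle beyond this measure-zero bookkeeping. Completeness is not claimed here, so the proof need not address it.
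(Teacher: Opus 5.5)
Your argument is correct. The paper states this corollary without proof, and the evident intended justification, given its placement right after \cref{prop:T-complete}, is exactly yours: $T=N\bar{R}\,\hat\mu$ makes $T$ and $(\bar{R},\hat\mu)$ measurable functions of each other, so the Neyman--Fisher factorization transfers.

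One simplification: since $\bar{R}=0$ on $\{T=0\}$, the identity $T=N\bar{R}\,\hat\mu$ holds pointwise whatever value you assign to $\hat\mu$ there. The factorization is therefore exact everywhere, and the null-set bookkeeping is not needed.
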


\subsection{Existing Estimators for Concentration}
\label{sec:existing}

Note that the mean resultant length is an approximation of expected value of the norm of some FvML distributed random variable,
\begin{equation}
    \bar{R} \approx \mathbb{E}\left[\left\lVert X\right\rVert\right],
\end{equation}
which, by \cref{eq:An-to-EnormX}, means that $A_n(\hat\kappa)=\bar{R}$, and that the \emph{maximum likelihood estimator} (MLE)~\cite{fisher1953dispersion} of $\kappa$ can be written as,
\begin{equation}\label{eq:MLE}
    \hat\kappa = A_n^{-1}(\bar{R}),
\end{equation}
which is well-defined a.s.\ for $N\ge2$ since $\bar{R}<1$ a.s.\ for the continuous $n\ge2$ FvML model.
Best and Fisher \cite{bestfisher1981} showed that the bias $\mathrm{Bias}(\hat\kappa)>0$, meaning that there is always positive bias, for
$n=2,3$.
Tanabe et al.~\cite{tanabe2007parameter} derived the leading $O(N^{-1})$ bias for general $n\ge2$.

Banerjee et al.\ ~\cite{banerjee2005clustering}
proposed the estimator,
\begin{equation}
    \hat\kappa_{\mathrm{B}}\coloneqq \bar{R}(n-\bar{R}^2)/(1-\bar{R}^2),
\end{equation}
as when $N\to\infty$, $\bar{R}\to A_n(\kappa)$ a.s., meaning that 
$\hat\kappa_{\mathrm{B}}\to
A_n(\kappa)(n-A_n(\kappa)^2)/(1-A_n(\kappa)^2)$.
However, the estimator $\hat\kappa_{\mathrm{B}}$ is inconsistent.
Building on top of this, Sra~\cite{sra2012short} applied a Newton step to improve accuracy, but, as we later empirically show, increase bias.

Best and Fisher~\cite{bestfisher1981} gave corrections for $n=2,3$ which were extended by Tanabe et al.~\cite{tanabe2007parameter}  to all $n\ge2$, achieving $O(N^{-2})$ residual bias.

For large $n$ or large $\kappa$, two asymptotic approximations exist.
The high-dimensional approximation~\cite{banerjee2005clustering} uses $A_n(\kappa)\approx\kappa/n$, giving the estimator,
\begin{equation}
    \hat\kappa_{\mathrm{hd}}\coloneqq n\bar{R},
\end{equation}
while the large-$\kappa$ approximation~\cite{watson1983statistics} uses
$A_n(\kappa)\approx1-(n-1)/(2\kappa)$, to get the estimator,
\begin{equation}
    \hat\kappa_{\mathrm{lk}}\coloneqq (n-1)/(2(1-\bar{R})).
\end{equation}
Both are biased outside their respective asymptotic regimes.

For the sample size $N\ge2$, the U-statistic~\cite{hoeffding1948class}
\begin{equation}
  \label{eq:u-stat-an2}
  U_{A^2} \coloneqq  \frac{N\bar{R}^2-1}{N-1}
\end{equation}
is an unbiased estimator of $A_n(\kappa)^2$.
Indeed, by
\cref{prop:An-properties}(i),
$\mathbb{E}[X_i]=A_n(\kappa)\mu$, so for $i\ne j$,
$\mathbb{E}[X_i^\top X_j]=A_n(\kappa)^2\|\mu\|^2=A_n(\kappa)^2$.
Since $\|X_i\|=1$, expanding $\|T\|^2=N+2\sum_{i<j}X_i^\top X_j$ gives
$U_{A^2}=\binom{N}{2}^{-1}\sum_{i<j}X_i^\top X_j$.
Inverting via $A_n^{-1}(\sqrt{\max(U_{A^2},0)})$ gives a consistent but biased estimator of $\kappa$.

%\cref{sec:estimators} generalizes this unbiased statistic from
%$A_n(\kappa)^2$ to powers $A_n(\kappa)^{2\ell}$.

\section{Impossibility of estimating concentration}
%Estimable Functions of \texorpdfstring{$\kappa$}{kappa} are Analytic in \texorpdfstring{$\kappa^2$}{kappa squared}
\label{sec:impossibility}

We now prove that it is not possible to estimate the concentration $\kappa$ through the fact that all estimable functions of the concentration $\kappa$ are analytic in $\kappa^2$.

We use the joint density~\eqref{eq:joint-density} for all dimension $n\geq 1$
and sample size $N\geq 1$.
When $n=1$ we use $\Sph{0}=\{-1,+1\}$ and $\sigma$ as the counting measure.
We use the fact that the sufficient statistic $T(x)=\sum_i x_i$ has $\norm{T(x)}_2\leq N$
on $\samples$, and we write $\E_{\mu,\kappa}$ for expectation under
$P_{\mu,\kappa}^{\otimes N}$.

\begin{theorem}[Estimable functions of $\kappa$ are real-analytic in $\kappa^2$]
\label{thm:estimable-analytic-in-kappa-sq}
Let $g\colon(0,\infty)\to\R$.  If a Borel-measurable
$\hat\theta\colon\samples\to\R$ satisfies
$\E_{\mu,\kappa}[\abs{\hat\theta}]<\infty$ and
$\E_{\mu,\kappa}[\hat\theta]=g(\kappa)$ for all
$(\mu,\kappa)\in\Sph{n-1}\times(0,\infty)$, then there exists a
real-analytic function $h$ on an open neighborhood of $[0,\infty)$ in
$\R$ with $g(\kappa)=h(\kappa^2)$ for all $\kappa>0$.
\end{theorem}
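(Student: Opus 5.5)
The idea is to write $g$ through the joint density and use rotational invariance to see that it is even in $\kappa$. By \eqref{eq:joint-density},
\begin{equation*}
  g(\kappa)=\E_{\mu,\kappa}[\hat\theta]
  = C_n(\kappa)^N\int_{\samples}\hat\theta(x)\,e^{\kappa\mu^\top T(x)}\,\dd\sigmaN(x)
  \eqqcolon C_n(\kappa)^N\,F(\kappa\mu).
\end{equation*}
Here $F(\eta)=\int\hat\theta\,e^{\eta^\top T}\dd\sigmaN$ is defined for $\eta\in\R^n$. Since $\norm{T}\le N$, we have $e^{\eta^\top T}\le e^{N\norm{\eta}}$.

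\textbf{Step 1: $F$ extends to an entire function on $\mathbb{C}^n$.} The integrability hypothesis at a single parameter, say $(\mu_0,1)$, gives $\int\abs{\hat\theta}\,e^{\mu_0^\top T}\dd\sigmaN<\infty$. On the compact set $\samples$ the weight $e^{\mu_0^\top T}$ is bounded below by $e^{-N}$, so $\hat\theta\in L^1(\sigmaN)$. (For $n=1$, $\sigmaN$ is counting measure and this is trivial.) Consequently, for complex $\eta$,
\begin{equation*}
  \abs{\hat\theta\,e^{\eta^\top T}}\le\abs{\hat\theta}\,e^{N\norm{\operatorname{Re}\eta}} .
\end{equation*}
Dominated convergence together with Morera's theorem (or differentiation under the integral) then shows $F$ is entire on $\mathbb{C}^n$. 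Alternatively, expand $e^{\eta^\top T}=\sum_k(\eta^\top T)^k/k!$ and integrate termwise, which is justified because the bounds are summable.

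\textbf{Step 2: $\kappa\mapsto F(\kappa\mu)$ is even.} Fix $\mu$. The function $\phi(\kappa)=F(\kappa\mu)=g(\kappa)/C_n(\kappa)^N$ is entire in $\kappa$. The key point is that $g$ does not depend on $\mu$, and $C_n$ does not either. Hence $F(\kappa\mu)=F(\kappa\mu')$ for all $\mu,\mu'\in\Sph{n-1}$ and all $\kappa>0$. Take $\mu'=-\mu$ to get $\phi(\kappa)=\phi(-\kappa)$ for $\kappa>0$. By the identity theorem this holds on all of $\mathbb{C}$, so $\phi$ is an even entire function. For $n=1$, $\Sph{0}=\{\pm1\}$ contains $\mu$ and $-\mu$, so the same argument applies. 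An even entire function has a power series in $\kappa^2$ only, so $\phi(\kappa)=\psi(\kappa^2)$ with $\psi$ entire.

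\textbf{Step 3: $1/C_n(\kappa)^N$ is analytic in $\kappa^2$ and nonvanishing.} We have $C_n(\kappa)^{-1}=\int e^{\kappa\mu^\top x}\dd\sigma(x)=F_1(\kappa\mu)$ with $\hat\theta\equiv1$, so by the same argument $C_n^{-1}=\chi(\kappa^2)$ with $\chi$ entire. This also follows directly from the series of $\kappa^{-(n/2-1)}I_{n/2-1}$, which has positive coefficients in $\kappa^2$; for $n=1$ it is $2\cosh\kappa$. For real $s\ge0$ we have $\chi(s)>0$, since it equals $\int e^{\sqrt s\,\mu^\top x}\dd\sigma>0$. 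By continuity $\chi$ has no zeros on some open neighborhood $U$ of $[0,\infty)$ in $\R$: positivity on the closed half-line extends to an open set because the zero set of $\chi$ is closed and disjoint from $[0,\infty)$. Setting $h=\psi/\chi^N$ on $U$ gives a real-analytic function with $g(\kappa)=C_n(\kappa)^N\psi(\kappa^2)=h(\kappa^2)$.

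I expect Step 1 to be the main technical point, namely justifying analyticity from integrability at only the given parameters, and the compactness bound handles it. The conceptual crux is Step 2: independence from $\mu$ combined with $-\mu\in\Sph{n-1}$ forces evenness. This is exactly why $g(\kappa)=\kappa$ is ruled out, since $\kappa$ is not of the form $h(\kappa^2)$ with $h$ analytic at $0$, because $\sqrt{s}$ is not analytic at $s=0$.
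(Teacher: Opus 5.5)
Your proof is correct, and it takes a leaner route than the paper's.

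The paper's route has four stages:
\begin{enumerate}
  \item It Haar-averages $\hat\theta$ over $O(n)$ to get an invariant $\tilde\theta$, and proves rotation invariance of the ratio $\tilde m(\eta)$ (its Steps 2 and 4).
  \item It shows $\tilde m$ is real-analytic on $\R^n$.
  \item It gets evenness of $t\mapsto\tilde m(tv)$ from a reflection.
  \item It glues the local even power series with $\varphi(\sqrt s)$ on $(0,\infty)$ to build $h$ on $(-r^2,\infty)$.
\end{enumerate}

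You instead notice that unbiasedness for \emph{every} $\mu$ already forces $F(\kappa\mu)=g(\kappa)/C_n(\kappa)^N$ to be independent of $\mu$. Evenness then follows at once from $-\mu\in\Sph{n-1}$, and the symmetrization is unnecessary. Indeed, the paper's identity $\tilde m(\eta)=g(\norm{\eta}_2)$ for $\eta\neq0$ holds verbatim with $\hat\theta$ in place of $\tilde\theta$.

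Working in the complex variable, you obtain an even entire $\phi$ and hence an entire $\psi$, with no gluing. The price is that you divide by the normalizer after factoring rather than before. Since $\chi$ can vanish at negative $s$ (for $n=1$, $\chi(s)=2\cosh\sqrt s$ vanishes at $s=-\pi^2/4$), your restriction to an open neighborhood $U$ of $[0,\infty)$ is genuinely needed, and you handle it correctly. The paper sidesteps this by forming the ratio on $\R^n$, where the normalizer is positive. A small bonus of your version is that $h=\psi/\chi^N$ is visibly the restriction of a meromorphic function on $\mathbb{C}$.

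One cosmetic point: $\phi(\kappa)=g(\kappa)/C_n(\kappa)^N$ holds only for $\kappa>0$, where $g$ is defined. That is all you use.
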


\begin{proof}
\emph{Step 1: Integrability against $\sigmaN$.}
Fix any $(\mu_0,\kappa_0)$.  The pointwise lower bound
$C_n(\kappa_0)^N e^{\kappa_0\mu_0^\top T(x)}\geq C_n(\kappa_0)^N e^{-\kappa_0 N}>0$
on $\samples$ gives
\begin{equation}
  \int_{\samples}\abs{\hat\theta(x)}\,\dd\sigmaN(x)
  \leq C_n(\kappa_0)^{-N} e^{\kappa_0 N}\,\E_{\mu_0,\kappa_0}[\abs{\hat\theta}]
  <\infty,
\end{equation}
so $\hat\theta\in L^1(\sigmaN)$.

\emph{Step 2: Haar symmetrize.}
Let $dQ$ denote normalized Haar measure on the compact group $O(n)$
acting diagonally on $\samples$ by $Qx\coloneqq(Qx_1,\ldots,Qx_N)$.
The map $(Q,x)\mapsto\hat\theta(Qx)$ is jointly Borel since the diagonal
action is continuous.  Set
\begin{equation}
  E \coloneqq \Bigl\{x\in\samples : \int_{O(n)}\abs{\hat\theta(Qx)}\,dQ<\infty\Bigr\},
\end{equation}
and define
\begin{equation}
  \tilde\theta(x) \coloneqq \mathbf 1_E(x)\int_{O(n)}\hat\theta(Qx)\,dQ.
\end{equation}
By Tonelli and $O(n)$-invariance of $\sigmaN$, it is the case that,
\begin{equation}
    \iint\abs{\hat\theta(Qx)}\,\dd\sigmaN(x)\,dQ =\norm{\hat\theta}_{L^1(\sigmaN)} < \infty,
\end{equation}
meaning $\sigmaN(E^c)=0$ and $\tilde\theta\in L^1(\sigmaN)$.
For $R\in O(n)$, substituting $Q'=QR$ in the inner integral gives, 
\begin{equation}
    \int\abs{\hat\theta(QRx)}\,dQ=\int\abs{\hat\theta(Q'x)}\,dQ',
\end{equation}
so $E$ is $O(n)$-invariant under the diagonal action.
The same substitution on
$\hat\theta(QRx)$ gives $\tilde\theta(Rx)=\tilde\theta(x)$ pointwise on
$\samples$.
Fubini applied to,
\begin{equation}
    \iint\abs{\hat\theta(Qx)}\,C_n(\kappa)^N e^{\kappa\mu^\top T(x)}\,\dd\sigmaN(x)\,dQ
\leq C_n(\kappa)^N e^{\kappa N}\norm{\hat\theta}_{L^1(\sigmaN)}<\infty,
\end{equation}
together with $T(Qx)=QT(x)$ and $\mu^\top Q^\top T(y)=(Q\mu)^\top T(y)$ then gives
\begin{equation}
  \E_{\mu,\kappa}[\tilde\theta]
  = \int_{O(n)}\E_{Q\mu,\kappa}[\hat\theta]\,dQ = g(\kappa).
\end{equation}

\emph{Step 3: Real-analyticity of the natural-parameter expectation.}
For $\eta\in\R^n$ define
\begin{equation}
  \label{eq:m}
  \tilde m(\eta) \coloneqq \frac{\int\tilde\theta(x)\,e^{\eta\cdot T(x)}\,\dd\sigmaN(x)}
                                {\int e^{\eta\cdot T(x)}\,\dd\sigmaN(x)}.
\end{equation}
On any compact $K\subset\R^n$ and any multi-index $\alpha$, the bound
\begin{equation}
\abs{\tilde\theta(x)\,T(x)^{\alpha}\,e^{\eta\cdot T(x)}}
\leq N^{\abs{\alpha}}\,e^{(\sup_K\norm{\eta}_2)N}\,\abs{\tilde\theta(x)}
\end{equation}
is integrable against $\sigmaN$ since $\tilde\theta\in L^1(\sigmaN)$;
the same bound with $\tilde\theta\equiv 1$ controls the denominator.
Differentiation under the integral therefore extends both the numerator
and the denominator to real-analytic functions on $\R^n$.
The denominator equals $C_n(\norm{\eta}_2)^{-N}>0$ on $\R^n$ by
$O(n)$-invariance, so $\tilde m$ is real-analytic on $\R^n$.
For $\eta\neq 0$, with $\kappa=\norm{\eta}_2$ and $\mu=\eta/\kappa$,
$\eta\cdot T(x)=\kappa\mu^\top T(x)$ and \eqref{eq:m} reduces to
$\tilde m(\eta)=\E_{\mu,\kappa}[\tilde\theta]=g(\kappa)$.

\emph{Step 4: Rotation invariance.}
For $Q\in O(n)$ and $\eta\in\R^n$, substitute $y=Q^{-1}x$ in numerator
and denominator of \eqref{eq:m}: $O(n)$-invariance of $\sigmaN$,
$\tilde\theta(Qy)=\tilde\theta(y)$, $T(Qy)=QT(y)$, and
$(Q\eta)\cdot QT(y)=\eta\cdot T(y)$ together yield
$\tilde m(Q\eta)=\tilde m(\eta)$.

\emph{Step 5: Reduce to a line.}
Fix $v\in\Sph{n-1}$ and set $\varphi(t)\coloneqq\tilde m(tv)$,
real-analytic on $\R$.  Choose $R\in O(n)$ with $Rv=-v$
($R=-I$ for $n=1$, reflection through $v^\perp$ for $n\geq 2$).
Then $\varphi(-t)=\tilde m(R(tv))=\tilde m(tv)=\varphi(t)$, so $\varphi$
is even.

\emph{Step 6: Even real-analytic factors through $t^2$.}
On any disk $\abs{t}<r$ where $\varphi(t)=\sum_{k\geq 0}c_k t^k$, evenness
forces $c_{2\ell+1}=0$, so $\psi_0(s)\coloneqq\sum_\ell c_{2\ell}s^\ell$ is
real-analytic on $\abs{s}<r^2$ and $\varphi(t)=\psi_0(t^2)$ for $\abs{t}<r$.
Define $\psi_1(s)\coloneqq\varphi(\sqrt s)$ on $(0,\infty)$;
this is real-analytic since $\sqrt{\cdot}$ is real-analytic on $(0,\infty)$
and $\varphi$ is real-analytic on $\R$.
On the overlap $(0,r^2)$, $\psi_0$ and $\psi_1$ both equal $\varphi(\sqrt s)$,
so they coincide; gluing yields a real-analytic $\psi$ on
$(-r^2,\infty)\supset[0,\infty)$ with $\varphi(t)=\psi(t^2)$ for all $t\in\R$.

\emph{Step 7: Identify.}
For $\eta\neq 0$, rotation invariance gives
$\tilde m(\eta)=\tilde m(\norm{\eta}_2 v)=\varphi(\norm{\eta}_2)
=\psi(\norm{\eta}_2^2)=\psi(\kappa^2)$,
while Step 3 gives $\tilde m(\eta)=g(\kappa)$.
Hence $g(\kappa)=h(\kappa^2)$ with $h\coloneqq\psi$ real-analytic on a
neighborhood of $[0,\infty)$.
\end{proof}

We are now ready to prove that there does not exist an unbiased estimator for $\kappa$.

\begin{corollary}[No unbiased estimator of $\kappa$]
\label{cor:no-unbiased-kappa}
For any $n\geq 1$ and $N\geq 1$, the concentration $\kappa$ of
$\FvML(\mu,\kappa)$ admits no unbiased estimator based on $N$ i.i.d.\
observations.
\end{corollary}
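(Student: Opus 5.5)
We argue by contradiction, using \cref{thm:estimable-analytic-in-kappa-sq}. Suppose some Borel-measurable $\hat\theta\colon\samples\to\R$ satisfies $\E_{\mu,\kappa}[\abs{\hat\theta}]<\infty$ and $\E_{\mu,\kappa}[\hat\theta]=\kappa$ for all $(\mu,\kappa)\in\Sph{n-1}\times(0,\infty)$. Applying the theorem with $g(\kappa)=\kappa$ produces a real-analytic $h$ on an open interval $(-\varepsilon,\infty)\supset[0,\infty)$ with $h(\kappa^2)=\kappa$ for all $\kappa>0$. In other words, $h(s)=\sqrt{s}$ for all $s>0$.

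The contradiction then comes from the behaviour at $s=0$. Since $h$ is real-analytic at $0$, it is in particular differentiable there, so the right derivative
\[
\lim_{s\to0^+}\frac{h(s)-h(0)}{s}
\]
must be finite. Continuity gives $h(0)=\lim_{s\to0^+}\sqrt{s}=0$, so this difference quotient equals $\sqrt{s}/s=s^{-1/2}$, which tends to $+\infty$. This is impossible, so no such $\hat\theta$ exists.

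Equivalently, one can phrase the contradiction through Step~6 of the proof of the theorem. The function $\varphi(t)=\tilde m(tv)$ is real-analytic and even on $\R$, but it would have to equal $\abs{t}$ for $t\neq0$, and $\abs{t}$ is not differentiable at $0$.

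The cases $n=1$ and $N=1$ need no separate treatment, because the theorem is stated for all $n\ge1$ and $N\ge1$.

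The only point needing care is the integrability hypothesis. An "unbiased estimator" should be understood as having a finite expectation under every $P_{\mu,\kappa}^{\otimes N}$, since otherwise $\E_{\mu,\kappa}[\hat\theta]$ is not defined. Under that reading the theorem applies directly and there is no real obstacle. The argument also shows more generally that no function $g$ with $g(\kappa)\sim c\kappa^{p}$ as $\kappa\to0^+$, for $p$ not an even integer, is unbiasedly estimable.
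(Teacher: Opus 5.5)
Your proof is correct and follows the paper's argument. You apply \cref{thm:estimable-analytic-in-kappa-sq} with $g(\kappa)=\kappa$, deduce $h(s)=\sqrt{s}$ on $(0,\infty)$, and contradict analyticity of $h$ at $0$; the only difference is that you use the difference quotient $s^{-1/2}\to\infty$ at $0$, where the paper uses $h'(s)=1/(2\sqrt{s})\to\infty$, and these make the same point.
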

\begin{proof}
If $\hat\kappa$ were unbiased, \cref{thm:estimable-analytic-in-kappa-sq}
applied with $g(\kappa)=\kappa$ would yield a real-analytic $h$ on a
neighborhood of $[0,\infty)$ with $\kappa=h(\kappa^2)$ for all $\kappa>0$,
forcing $h(s)=\sqrt{s}$ on $(0,\infty)$.
But $h'(s)=1/(2\sqrt s)\to\infty$ as $s\downarrow 0$, so $\sqrt{\cdot}$
admits no real-analytic extension across $0$, a contradiction, as required.
\end{proof}

\Cref{thm:estimable-analytic-in-kappa-sq} forces every unbiasedly estimable function of $\kappa$ to lie in the algebra of real analytic functions of $\kappa^2$, which contains $\kappa^{2\ell}$ for every $\ell\geq 1$ and their convergent combinations.
As, according to~\cref{cor:no-unbiased-kappa} there is no unbiased estimator for $\kappa$, we are forced to look at other alternatives, of which the simplest non-trivial member is $\kappa^2$ itself, corresponding to $h(s)=s$, which we prove in the next section, does have an unbiased estimator.

What this means, is that the standard parameterization of the FvML density~\cref{eq:fvml-density} is ill-suited towards probabilistic modeling, and an alternative, estimable, parameterization needs to be constructed.
Instead of concentration $\kappa$, we propose to look at,
\begin{equation}\label{eq:intensity}
    \zeta \coloneqq \kappa^2,
\end{equation}
which is the squared concentration. 
We term $\zeta$ in~\cref{eq:intensity}, as the \textit{intensity}.
The FvML density~\cref{eq:fvml-density} rewritten in terms of intesity is given by,
\begin{equation}
  \label{eq:fvml-intensity-density}
  f(x;\mu,\zeta) = C_n(\zeta)\,e^{\sqrt{\zeta}\mu^\top x},
  \quad
  C_n(\zeta) = \frac{\zeta^{n/4-1/2}}{(2\pi)^{n/2}\,I_{n/2-1}(\sqrt{\zeta})},
  \quad \zeta>0,
\end{equation}
with a simple change of variable.
Throughout the rest of this work we make use of both the concentration and the intensity as related quantities.

It is then useful to look at the squared MLE estimator $\hat\zeta_{\mathrm{MLE}}\coloneqq \hat\kappa^2$ where $\hat\kappa$ is from~\cref{eq:MLE}.
The MLE $\hat\zeta_{\mathrm{MLE}}$ is consistent and asymptotically efficient \cite{mardia2009directional} but positively biased \cite{bestfisher1981,tanabe2007parameter}, which we later argue is an undesirable property. 
%Further in this work~\cref{tab:comparators} lists standard biased alternatives to the MLE used in the experiments \cref{sec:experiments}.

%%%%%%%%%%%%%%%%%%%%%%%%%%%%%%%%%%%%%%%%%%%%%%%%%
\section{Unbiased Estimators for Inverse-Series Partial-Sum Surrogates of Intensity}
\label{sec:estimators}
%%%%%%%%%%%%%%%%%%%%%%%%%%%%%%%%%%%%%%%%%%%%%%%%%

We now provide an unbiased estimator for the intensity $\zeta$. 
This is done in thee parts: first we provide a series expansion for $\zeta$ in terms of the terms the function $A_n(\kappa)$ from~\cref{prop:An-properties}, second, we provide a way to create unbiased estimates of even powers of $A_n(\kappa)$, and  finally, we put it all together to create an unbiased estimate of the intensity $\zeta$ in terms of a power series. 
There is a small but important caveat to our claims, that our unbiased estimator would require infinitely many samples in order to truly be unbiased, but this is shown to not be an issue in numerical experiments later in this work.

% \subsection{The series expansion of \texorpdfstring{$\zeta=\kappa^2$}{zeta equals kappa squared}}
% \label{sec:kappa-sq-series}

Recall from~\eqref{eq:An-series} that $A_n$ is odd and analytic near zero.
Because $A_n'(0)>0$, the formal inverse series $A_n^{-1}$ exists near zero by the analytic implicit function theorem.
Squaring this inverse gives the intensity $\zeta$ as a series in $A_n(\kappa)^2$, which we now derive.

\begin{theorem}[Series for $\zeta$]\label{thm:kappa-sq-series}
For $n\ge2$ there exists $\rho_n\in(0,R_n]$ and coefficients $c_1,c_2,\ldots$ depending only on $n$ such that, for $0<\kappa<\rho_n$,
\begin{equation}
  \label{eq:kappa-sq-series}
  \zeta = \sum_{\ell=1}^\infty c_\ell\,A_n(\kappa)^{2\ell}.
\end{equation}
The coefficients are given by the reversion recursion: setting,
\begin{equation}
    b_k\coloneqq [A_n(\kappa)^2]_{\kappa^{2k}}=\sum_{i+j=k-1}a_ia_j,
\end{equation}
where the $a_j$ are from~\eqref{eq:An-series}, we can write the coefficients as,
\begin{equation}
  \label{eq:ck-recursion}
  c_1 = \frac{1}{b_1} = n^2,
  \qquad
  c_\ell = -\frac{1}{b_1^\ell}\sum_{j=1}^{\ell-1}c_j
    \bigl[A_n(\kappa)^{2j}\bigr]_{\kappa^{2\ell}},
  \quad \ell\ge2,
\end{equation}
where the notation $[f(\kappa)]_{\kappa^{2\ell}}$ denotes the coefficient of $\kappa^{2\ell}$
in the power series of $f$.
\end{theorem}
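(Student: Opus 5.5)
The proof will be analytic inversion of a power series in the variable $s=\kappa^2$. By \cref{prop:An-properties}(iii), $A_n(\kappa)=\kappa\,\alpha(\kappa^2)$ with $\alpha(s)=\sum_{j\ge0}a_js^j$ analytic for $|s|<R_n^2$. Hence
\begin{equation*}
  u\coloneqq A_n(\kappa)^2=s\,\alpha(s)^2=\sum_{k\ge1}b_k s^k ,
\end{equation*}
and the Cauchy product gives $b_k=\sum_{i+j=k-1}a_ia_j$. In particular $b_1=a_0^2=1/n^2\neq0$. The function $F(s)\coloneqq s\,\alpha(s)^2$ is therefore analytic near $s=0$, with $F(0)=0$ and $F'(0)=b_1\neq0$.

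By the holomorphic inverse function theorem there are neighborhoods of $0$ on which $F$ has an analytic inverse $G(u)=\sum_{\ell\ge1}c_\ell u^\ell$. This power series has a positive radius of convergence $r>0$, and $G(F(s))=s$ holds for $|s|$ small. I then choose $\rho_n\in(0,R_n]$ so small that $\kappa<\rho_n$ implies two things: the identity $G(F(\kappa^2))=\kappa^2$ holds, and $A_n(\kappa)^2<r$. This is possible by continuity of $A_n$ at $0$. For such $\kappa$ we get $\zeta=\kappa^2=\sum_\ell c_\ell A_n(\kappa)^{2\ell}$, which is \eqref{eq:kappa-sq-series}. The coefficients depend only on $n$, since the $a_j$ do.

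For the recursion \eqref{eq:ck-recursion}, I substitute $u=F(s)$ into $G$ and compare coefficients of $s^\ell$ in $s=\sum_j c_jF(s)^j$. This is legitimate because both sides are convergent power series near $0$. Since $F(s)^j$ starts at $s^j$ with leading coefficient $b_1^j$, the coefficient of $s^1$ gives $c_1b_1=1$, hence $c_1=n^2$. For $\ell\ge2$ the coefficient of $s^\ell$ gives
\begin{equation*}
  0=c_\ell b_1^\ell+\sum_{j=1}^{\ell-1}c_j\,[A_n(\kappa)^{2j}]_{\kappa^{2\ell}} .
\end{equation*}
Solving for $c_\ell$ yields the stated formula.

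The main obstacle is making the domain statement honest. Two points need care. First, the composition identity $G\circ F=\mathrm{id}$ holds only on a possibly smaller disk than $|s|<R_n^2$. Second, $\rho_n$ must be chosen so that the series in $A_n(\kappa)^2$ actually converges there. I would handle both through the explicit neighborhoods given by the inverse function theorem, together with continuity and monotonicity of $A_n$ from \cref{prop:An-properties}(ii). I would not attempt to identify $\rho_n$ sharply; the theorem only asserts existence.
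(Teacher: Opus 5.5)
Your proposal is correct and follows essentially the same route as the paper. You pass to $s=\kappa^2$ (the paper's $w$), invert $u=A_n(\sqrt{s})^2$ via the analytic inverse/implicit function theorem using $b_1=1/n^2\neq0$, choose $\rho_n$ so that $A_n(\kappa)^2$ stays inside the convergence disc of the inverse series (the paper does this explicitly via $\rho_n=\sup\{\kappa\in(0,R_n):A_n(\kappa)^2<\tau_n\}$), and match coefficients of $\kappa^{2\ell}$ in $\kappa^2=\sum_j c_jA_n(\kappa)^{2j}$, where your identity $c_\ell b_1^\ell+\sum_{j<\ell}c_j[A_n(\kappa)^{2j}]_{\kappa^{2\ell}}=0$ gives exactly the stated recursion.
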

\begin{proof}
In order to be consistent, all derivations are performed in terms of concentration $\kappa$.
As, $A_n(\kappa)=\kappa/n+O(\kappa^3)$ that means that $A_n(\kappa)^2=\kappa^2/n^2+O(\kappa^4)$. Defining $u(w)\coloneqq A_n(\sqrt w)^2$, $u(w)$ is real-analytic on $(-R_n^2,R_n^2)$ with nonzero derivative at $w=0$.
The analytic implicit function theorem \cite[Thm.~2.2]{krantz2002primer} inverts $u$ to give $w=\sum_\ell c_\ell u^\ell$ on $\abs{u}<\tau_n$ for some $\tau_n>0$ (Lagrange-B\"urmann reversion).
Setting $\rho_n\coloneqq \sup\{\kappa\in(0,R_n) : A_n(\kappa)^2<\tau_n\}$ yields a positive interval $(0,\rho_n)\subseteq(0,R_n)$ on which $\kappa^2=\sum_\ell c_\ell A_n(\kappa)^{2\ell}$.
Expanding $\kappa^2=\sum_\ell c_\ell(\kappa^2 B(\kappa^2))^\ell$ with
$B(\kappa^2)=A_n(\kappa)^2/\kappa^2=b_1+b_2\kappa^2+\cdots$
and matching coefficients of $\kappa^{2\ell}$ gives
\eqref{eq:ck-recursion}.
\end{proof}

The first five coefficients are listed in \cref{tab:cs}.  
Each is a positive rational function of $n$ for $n\ge2$ by inspection: the numerators have nonnegative integer coefficients in $n$, and the denominators are products of positive factors $(n+2k)$.
Higher-order coefficients $c_\ell$ for $\ell\geq5$ are obtained by direct evaluation of~\eqref{eq:ck-recursion}. 
A simple closed form solution to the coefficients is of independent interest.
%The first few $a_j$ coefficients underlying~\eqref{eq:ck-recursion} are recorded in \cref{tab:as}.

\begin{table}
  \centering
  \caption{First coefficients $c_\ell$ in the series
    $\zeta=\sum_{\ell\ge1}c_\ell A_n(\kappa)^{2\ell}$,
    computed via~\eqref{eq:ck-recursion}.
    Each entry is a positive rational function of $n$ for all $n\ge2$.}
  \label{tab:cs}
  \small
  \setlength{\tabcolsep}{4pt}
  \begin{tabular}{c||llll}
    \hline
    $\ell$ & 1 & 2 & 3 & 4\\\hline
    $c_\ell$ & $n^2$ & $\dfrac{2n^3}{n+2}$ & $\dfrac{n^4(3n+20)}{(n+2)^2(n+4)}$ & $\dfrac{4n^5(n^2+14n+84)}{(n+2)^3(n+4)(n+6)}$ \\
    \hline
  \end{tabular}
\end{table}

% \subsection{Unbiased estimation of \texorpdfstring{$A_n(\kappa)^{2\ell}$}{An^{2l}}}
% \label{sec:ustat}

We now move on to the second part, deriving unbiased estimators of $A_n(\kappa)^{2\ell}$.

\begin{lemma}\label{lem:unbiased-A2l}
Let $X_1,\ldots,X_N\overset{\mathrm{iid}}{\sim}\mathrm{FvML}(\mu,\kappa)$, in terms of concentration $\kappa$, and let $i_1,\ldots,i_{2\ell}$ be distinct indices in $\{1,\ldots,N\}$.
It is then the case that,
\begin{equation}
  \label{eq:kernel-expectation}
  \mathbb{E}\left[(X_{i_1}\cdot X_{i_2})(X_{i_3}\cdot X_{i_4})
    \cdots(X_{i_{2\ell-1}}\cdot X_{i_{2\ell}})\right]
  = A_n(\kappa)^{2\ell}.
\end{equation}
\end{lemma}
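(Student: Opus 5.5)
Because the indices $i_1,\ldots,i_{2\ell}$ are distinct and the $X_i$ are i.i.d., the $\ell$ pairs $(X_{i_1},X_{i_2}),\ldots,(X_{i_{2\ell-1}},X_{i_{2\ell}})$ are built from disjoint sets of independent variables. The $\ell$ scalar random variables $X_{i_{2k-1}}\cdot X_{i_{2k}}$, $k=1,\ldots,\ell$, are therefore mutually independent. The plan is to first factor the expectation in~\eqref{eq:kernel-expectation} into a product of $\ell$ expectations of single inner products, and then evaluate each factor as $A_n(\kappa)^2$.

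\emph{Integrability.} Each factor satisfies $\abs{X_i\cdot X_j}\le\norm{X_i}\norm{X_j}=1$, so the product is bounded by $1$ and all expectations are finite. This justifies the factorization $\mathbb{E}\bigl[\prod_k (X_{i_{2k-1}}\cdot X_{i_{2k}})\bigr]=\prod_k\mathbb{E}[X_{i_{2k-1}}\cdot X_{i_{2k}}]$ by independence, with no further conditions needed.

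\emph{Evaluating one factor.} For distinct $i\neq j$, independence of $X_i$ and $X_j$ lets us write
\[
\mathbb{E}[X_i\cdot X_j]=\sum_{r=1}^n\mathbb{E}[(X_i)_r]\,\mathbb{E}[(X_j)_r]=\mathbb{E}[X_i]\cdot\mathbb{E}[X_j].
\]
By \cref{prop:An-properties}(i), $\mathbb{E}[X_i]=A_n(\kappa)\mu$, so $\mathbb{E}[X_i\cdot X_j]=A_n(\kappa)^2\norm{\mu}^2=A_n(\kappa)^2$. This is the same computation already used in the paper to justify the unbiasedness of $U_{A^2}$ in~\eqref{eq:u-stat-an2}. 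Multiplying the $\ell$ factors gives $A_n(\kappa)^{2\ell}$.

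\emph{Main point of care.} There is no real obstacle here. The only point to state carefully is that the independence used in the factorization comes from disjointness of the index pairs, which is exactly why the lemma requires the $2\ell$ indices to be distinct. If indices repeated, terms like $\norm{X_i}^2=1$ or $\mathbb{E}[(X_i\cdot X_j)(X_i\cdot X_k)]$ would appear, and these are not powers of $A_n(\kappa)$. For $n=1$ the same argument works verbatim, but the lemma is stated within the standing assumption $n\ge2$, where \cref{prop:An-properties}(i) applies.
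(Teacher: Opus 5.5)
Your proposal is correct and follows the paper's proof essentially step for step. You factor the expectation over the $\ell$ disjoint index pairs by independence, then evaluate each factor as $\mathbb{E}[X]^\top\mathbb{E}[Y]=A_n(\kappa)^2\norm{\mu}^2=A_n(\kappa)^2$ via \cref{prop:An-properties}(i), and your boundedness remark $\abs{X_i\cdot X_j}\le 1$ justifying the factorization is a small detail the paper leaves implicit.
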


\begin{proof}
First note that by independence of the $X_i$, the expectation factors as
\begin{equation}
    \mathbb{E}\left[(X_{i_1}\cdot X_{i_2})(X_{i_3}\cdot X_{i_4})
    \cdots(X_{i_{2\ell-1}}\cdot X_{i_{2\ell}})\right] = \prod_{k=1}^\ell\mathbb{E}[X_{i_{2k-1}}\cdot X_{i_{2k}}].
\end{equation}
Furthermore, for any two independent copies $X,Y\sim\mathrm{FvML}(\mu,\kappa)$, independence gives
\begin{equation}
    \mathbb{E}[X^\top Y]=\mathbb{E}[X]^\top\mathbb{E}[Y],
\end{equation}
and, from the fact that $\mu^\top\mu = 1$, and that $\mathbb{E}[X]=A_n(\kappa)\mu$, it is true that,
\begin{equation}
    \mathbb{E}[X^\top Y]=A_n(\kappa)^2.
\end{equation}
Finally taking the product over $k=1,\ldots,\ell$ gives $A_n(\kappa)^{2\ell}$, as required.
\end{proof}

We now derive unbiased estimator for arbitrary $A_n(\kappa)^{2\ell}$.

First define the sum, 
\begin{equation}
  \label{eq:Upaired}
  D_{N,\ell} \coloneqq 
  \sum_{\substack{i_1,\ldots,i_{2\ell}\\\text{distinct}}}
  (X_{i_1}\cdot X_{i_2})(X_{i_3}\cdot X_{i_4})
    \cdots(X_{i_{2\ell-1}}\cdot X_{i_{2\ell}}),
\end{equation}
which runs over all ordered $2\ell$-tuples of distinct indices, under the adjacent pairing of indices, as,
$(i_1,i_2),(i_3,i_4),\ldots,(i_{2\ell-1},i_{2\ell})$.
%The sum $D_{N,\ell}/(N)_{2\ell}$ is the ordered adjacent-pair kernel average; we use this form throughout for its direct factorization through the Gram matrix (\cref{sec:computation}).

\begin{theorem}[Unbiased estimator of $A_n(\kappa)^{2\ell}$]
\label{thm:unbiased-A2l}
For some number of samples $N\ge2\ell$, the expression,
\begin{equation}
  \label{eq:UA2l}
  \widehat{A^{2\ell}} \coloneqq \frac{D_{N,\ell}}{(N)_{2\ell}}
\end{equation}
is an unbiased estimator of $A_n(\kappa)^{2\ell}$, where $(N)_{2\ell}\coloneqq N(N-1)\cdots(N-2\ell+1)$ is the falling factorial.
\end{theorem}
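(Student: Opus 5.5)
The proof is linearity of expectation applied to the finite sum $D_{N,\ell}$, combined with \cref{lem:unbiased-A2l}.

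First, since $N\ge 2\ell$, the index set in~\eqref{eq:Upaired} (ordered $2\ell$-tuples of distinct elements of $\{1,\ldots,N\}$) is nonempty. It has exactly $(N)_{2\ell}=N(N-1)\cdots(N-2\ell+1)$ elements: there are $N$ choices for $i_1$, then $N-1$ for $i_2$, and so on down to $N-2\ell+1$ for $i_{2\ell}$. Hence the normalization in~\eqref{eq:UA2l} is a positive integer and $\widehat{A^{2\ell}}$ is well defined.

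Second, I check integrability, so that expectation may be exchanged with the finite sum. Each summand is a product of $\ell$ inner products of unit vectors. By Cauchy--Schwarz each factor satisfies $\abs{X_{i}\cdot X_{j}}\le 1$, so every summand is bounded in absolute value by $1$ and $\abs{D_{N,\ell}}\le (N)_{2\ell}$. Thus $\widehat{A^{2\ell}}$ is bounded, and linearity of expectation applies to the finite sum without further justification.

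Third, for each fixed ordered tuple of distinct indices, \cref{lem:unbiased-A2l} gives
\begin{equation*}
\mathbb{E}\bigl[(X_{i_1}\cdot X_{i_2})\cdots(X_{i_{2\ell-1}}\cdot X_{i_{2\ell}})\bigr]=A_n(\kappa)^{2\ell}.
\end{equation*}
This value is the same for every tuple, since the lemma uses only distinctness together with the i.i.d.\ assumption. Summing over all $(N)_{2\ell}$ tuples gives $\mathbb{E}[D_{N,\ell}]=(N)_{2\ell}\,A_n(\kappa)^{2\ell}$, and dividing by $(N)_{2\ell}$ yields unbiasedness for every $(\mu,\kappa)$.

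There is no real obstacle in this argument. The only point needing care is that the sum in~\eqref{eq:Upaired} runs over pairwise-distinct indices. This is what makes the lemma applicable: it guarantees that the $\ell$ pair-factors involve disjoint, hence independent, blocks of observations, and it rules out repeated indices, where $X_i\cdot X_i=1$ would bias the term. The counting of tuples then fixes the normalization. As a consistency check, for $\ell=1$ the estimator reduces to $U_{A^2}$ in~\eqref{eq:u-stat-an2}, because $D_{N,1}=\norm{T}^2-N$ and $(N)_2=N(N-1)$.
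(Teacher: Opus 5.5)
Your proposal is correct and follows the same route as the paper's proof. Both apply linearity of expectation over the $(N)_{2\ell}$ ordered tuples of distinct indices, with each term contributing $A_n(\kappa)^{2\ell}$ by Lemma~\ref{lem:unbiased-A2l}; your boundedness check and your $\ell=1$ consistency check against $U_{A^2}$ are valid details that the paper leaves implicit.
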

\begin{proof}
Expanding the expectation of $D_{N,\ell}$, each term in the sum has distinct indices, so \cref{lem:unbiased-A2l} applies to each.
As there are $(N)_{2\ell}$ such terms, it is the case that 
\begin{equation}
    \mathbb{E}[D_{N,\ell}]=(N)_{2\ell}\cdot A_n(\kappa)^{2\ell},
\end{equation}
as required.
\end{proof}

% \subsection{The estimator \texorpdfstring{$\hat\zeta_M$}{zeta-hat-M}}
% \label{sec:estimator}

We are now ready to provide a truncated of the unbiased estimator of the intensity $\zeta$~\cref{eq:intensity}.

\begin{corollary}[Unbiased estimator of the $M$-term partial sum]
\label{thm:unbiased-kappa-sq}
For the embedded dimension $n\ge2$, a finite truncation $M\ge1$, and number of samples $N\ge2M$, the estimator,
\begin{equation}
  \label{eq:zeta-M}
  \hat\zeta_M \coloneqq  \sum_{\ell=1}^M c_\ell\,\widehat{A^{2\ell}}
  = \sum_{\ell=1}^M \frac{c_\ell\,D_{N,\ell}}{(N)_{2\ell}}
\end{equation}
is an unbiased estimator of the partial sum,
\begin{equation}
  \label{eq:partial-sum}
  \zeta_M(\kappa) \coloneqq  \sum_{\ell=1}^M c_\ell\,A_n(\kappa)^{2\ell}.
\end{equation}
As $M\to\infty$, $\zeta_M(\kappa)\to\zeta$ for every $0<\kappa<\rho_n$,
and the truncation error is
$\E[\hat\zeta_M]-\zeta = -\sum_{\ell>M}c_\ell A_n(\kappa)^{2\ell}$ on that range. 
\end{corollary}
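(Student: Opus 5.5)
The plan is to get unbiasedness from linearity of expectation applied to \cref{thm:unbiased-A2l}, and then to read the truncation statement off \cref{thm:kappa-sq-series}.

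\emph{Unbiasedness.} Since $N\ge 2M\ge 2\ell$ for every $\ell\le M$, each $\widehat{A^{2\ell}}=D_{N,\ell}/(N)_{2\ell}$ is well defined, and \cref{thm:unbiased-A2l} gives $\E[\widehat{A^{2\ell}}]=A_n(\kappa)^{2\ell}$. Integrability is immediate, because every summand of $D_{N,\ell}$ is a product of inner products of unit vectors. Hence $\abs{D_{N,\ell}}\le (N)_{2\ell}$ and each $\widehat{A^{2\ell}}$ is bounded by $1$. The coefficients $c_\ell$ depend only on $n$, not on $\kappa$ or the data. Linearity of expectation over the finite sum \eqref{eq:zeta-M} therefore yields
\[
\E[\hat\zeta_M]=\sum_{\ell=1}^M c_\ell A_n(\kappa)^{2\ell}=\zeta_M(\kappa)
\]
for every $(\mu,\kappa)$.

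\emph{Convergence and truncation error.} For $0<\kappa<\rho_n$, \cref{thm:kappa-sq-series} states that the series $\sum_{\ell\ge1}c_\ell A_n(\kappa)^{2\ell}$ converges to $\zeta=\kappa^2$. The partial sums $\zeta_M(\kappa)$ are exactly its $M$-th partial sums, so $\zeta_M(\kappa)\to\zeta$ as $M\to\infty$. Subtracting the partial sum from the full sum then gives
\[
\E[\hat\zeta_M]-\zeta=\zeta_M(\kappa)-\zeta=-\sum_{\ell>M}c_\ell A_n(\kappa)^{2\ell},
\]
where the tail converges as the remainder of a convergent series.

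\emph{Main obstacle.} The only delicate point is making sure the convergence claim is legitimate on all of $(0,\rho_n)$. This requires that $A_n(\kappa)^2<\tau_n$ there, so that $u=A_n(\kappa)^2$ lies inside the disk of convergence of the reversion series. I would check this from the definition of $\rho_n$ in the proof of \cref{thm:kappa-sq-series} together with the strict monotonicity of $A_n$ from \cref{prop:An-properties}(ii). Monotonicity makes $\{\kappa: A_n(\kappa)^2<\tau_n\}$ an initial interval, so every $\kappa<\rho_n$ satisfies the bound.

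It also requires that the reverted series evaluated at $u=A_n(\kappa)^2$ actually returns $\kappa^2$, and not some other branch. This holds because $w\mapsto u(w)$ is injective near $0$ on the real line, again by monotonicity, and the two functions agree near $0$. Real-analytic continuation along the interval $(0,\rho_n)$, on which both sides are analytic, then extends the identity to the whole interval.
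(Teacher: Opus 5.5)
Your proposal is correct and follows the paper's proof: unbiasedness by linearity from \cref{thm:unbiased-A2l}, convergence cited from \cref{thm:kappa-sq-series}, and the truncation formula obtained by subtracting the $M$-th partial sum from the full series. Your boundedness remark ($\abs{D_{N,\ell}}\le (N)_{2\ell}$) and your check that the reverted series actually returns $\kappa^2$ on all of $(0,\rho_n)$ (via monotonicity of $A_n$ and the identity theorem) are sound extra details; the paper takes the latter as already established in \cref{thm:kappa-sq-series}.
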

\begin{proof}
Unbiasedness of $\hat\zeta_M$ for $\zeta_M(\kappa)$ follows by linearity
from \cref{thm:unbiased-A2l}.  Convergence $\zeta_M(\kappa)\to\zeta$ is \cref{thm:kappa-sq-series}, and the truncation formula follows from $\E[\hat\zeta_M]=\zeta_M(\kappa)$.
\end{proof}

As the infinite sum exactly converges to $\zeta$, 
we call $\hat\zeta_M$ the \emph{partial-sum U-statistic} for the intensity.
It is important to note that the estimator $\hat\zeta_M$ is only unbiased when $M\to\infty$ and that an infinite $M$ can only be achieved through infinitely many samples. In practice, we will experimentally show that this is still a substantial improvement over biased estimators for computing $\kappa$.

% Throughout the rest of the paper, $\hat\zeta_M$ denotes this
% squared-scale estimator, 
% while $\hat\kappa_M^{\mathrm{pp}}$
% (\cref{sec:positive-part}) is reserved for concentration-scale
% RMSE reporting.

\subsection{Efficient computation}
\label{sec:computation}

The most important terms of the estimator $\hat\zeta_M$ from~\cref{eq:zeta-M} to compute are the terms $D_{N,\ell}$ in~\cref{eq:Upaired}.
A naive evaluation of each would requires $O(N^{2\ell})$ dot products, which in the large-sample regime, is completely intractable.
However, because the kernel factors through pairwise inner products, every term in $D_{N,\ell}$ is a product of $\ell$ off-diagonal entries of the Gram
matrix,
\begin{equation}
    G\coloneqq XX^\top\in\R^{N\times N},\quad G_{ij}=X_i\cdot X_j.
\end{equation}
The complete adjacent-pair U-statistic $D_{N,\ell}/(N)_{2\ell}$ then admits the standard symmetric polynomial reduction to scalar contractions of $G$ \cite[Ch.~4]{mccullagh2018tensor}, thus being able to reduce the computational load.
It is possible to evaluate these contractions through liberal use of caching.
Forming $G$ costs $O(N^2 n)$ and the subsequent contraction cost is completely dominated by it, significantly reducing the computational burden.
It is the authors' experience that this version of the computation with small finite partial sum parameter $M$ works best when the dimension $n$ is large.

Another alternative to the above is to recognize the identity $G_{ij}^m=(X_i\cdot X_j)^m=X_i^{\otimes m}\cdot X_j^{\otimes m}$, from which
each contraction can be reformulated as a tensor einsum over the sample
moment tensors $t_d\coloneqq \sum_{i=1}^N X_i^{\otimes d}\in\mathbb{R}^{n^d}$,
requiring $O(Nn^M)$ preprocessing time.
This is preferred when $N\gg n^{M-1}$.

While the estimation above is theoretically sound, it is still computationally expensive.
More importantly, the partial sum statistic is limited by a lack of samples, as it has to be the case that $M \leq 2N$, thus if the number of samples is low, only a few terms of the partial sum can be computed.
We can instead look at approximating the partial sum for larger $M$ in the following fashion: for each $\ell \in \{1,\dots, M\}$, we approximate the sum in~\cref{eq:Upaired} with a finite, fixed amount of samples $P$. 
If $\ell\leq 2N$, then each sample is a term of the sum computed using a uniformly random selection of a permutation of the indices. 
Otherwise, a uniformly random selection of $2\ell$ indices (with repetition) is chosen.
In this way a biased estimate of the partial sum can be computed for large partial sum parameter $M$ even when the number of samples $N$ is small.
It is the authors' experience that this version of the computation with large finite partial sum parameter $M$ works best when the dimension $n$ is small, like $n=2$.

Computing the coefficients $c_\ell$ in~\cref{eq:ck-recursion} can also be accomplished in a computationally efficient manner through the use of the fast-Fourier transform, taking $\mathcal{O}(M\log M)$ operations.

% \subsection{Positive-part square-root estimator}
% \label{sec:positive-part}

% Because $\widehat{A^{2\ell}}$ is an average of dot-product products,
% $\hat\zeta_M$ can take negative values at finite $N$.  We therefore
% take the positive part before extracting the square root,
% \begin{equation}
%   \label{eq:kappa-cons}
%   \hat\kappa_M^{\mathrm{pp}} \coloneqq  \bigl(\hat\zeta_M\bigr)_+^{1/2},
%   \qquad (t)_+ \coloneqq  \max(t,0),
% \end{equation}
% yielding a real-valued, nonnegative square-root summary.

% Because $c_1,\ldots,c_5>0$ (by inspection of \cref{tab:cs}),
% $\zeta_M(\kappa)>0$ on $(0,\rho_n)$ at $M\le5$, so by consistency
% $\hat\zeta_M\to\zeta_M(\kappa)>0$ almost surely as $N\to\infty$;
% the positive-part truncation is therefore inactive asymptotically
% and $\hat\kappa_M^{\mathrm{pp}}\to\sqrt{\zeta_M(\kappa)}$ a.s.
% The next section measures when the finite surrogate error is smaller
% than the sampling and MLE-inversion errors it avoids.

\section{Synthetic Data Experiments}
\label{sec:synthetic}

\begin{table}
  \centering
  \small
  \begin{tabular}{ll}
    \hline
    Name & Formula or definition \\
    \hline
    MLE$^2$ & $\hat\kappa^2$, $A_n(\hat\kappa)=\bar R$ \cite{fisher1953dispersion} \\
    Banerjee & $\hat\kappa_{\mathrm{B}}^2$, $\hat\kappa_{\mathrm{B}}=\bar R(n-\bar R^2)/(1-\bar R^2)$ \cite{banerjee2005clustering} \\
    Sra & Sra-Newton refinement of $\hat\kappa_{\mathrm{B}}$ \cite{sra2012short} \\
    Tanabe BC & Tanabe $O(N^{-2})$ bias correction \cite{tanabe2007parameter} \\
    High-dim approx & $(n\bar R)^2$, high-dim approx \cite{banerjee2005clustering} \\
    Large-$\kappa$ approx & $((n-1)/(2(1-\bar R)))^2$, large-$\kappa$ approx \cite{watson1983statistics} \\
    $U_{A_n^2}$ & $(A_n^{-1}(\sqrt{\max(U_{A^2},0)}))^2$ \cite{hoeffding1948class} \\
    U-stat & This work with exact partial sum computation \\
    RU-stat & This work with random partial sum computation \\
    \hline
  \end{tabular}
    \caption{Intensity-scale estimators. All estimators depend on the data through $\bar R=\|T\|/N$ alone.}
  \label{tab:comparators}
\end{table}

% \begin{table}
%   \centering
%   \caption{Intensity-scale estimators. All estimators depend on the data through $\bar R=\|T\|/N$ alone.}
%   \label{tab:comparators}
%   \small
%   \begin{tabular}{ll}
%     \hline
%     Label & Formula or definition \\
%     \hline
%     $\hat\zeta_{\mathrm{MLE}}$ & $\hat\kappa^2$, $A_n(\hat\kappa)=\bar R$ \cite{fisher1953dispersion} \\
%     $\hat\zeta_{A^2}$ & $(A_n^{-1}(\sqrt{\max(U_{A^2},0)}))^2$ \cite{hoeffding1948class} \\
%     $\hat\zeta_{\mathrm{B}}$ & $\hat\kappa_{\mathrm{B}}^2$, $\hat\kappa_{\mathrm{B}}=\bar R(n-\bar R^2)/(1-\bar R^2)$ \cite{banerjee2005clustering} \\
%     $\hat\zeta_{\mathrm{Sra}}$ & Sra-Newton refinement of $\hat\kappa_{\mathrm{B}}$ \cite{sra2012short} \\
%     $\hat\zeta_{\mathrm{T}}$ & Tanabe $O(N^{-2})$ bias correction \cite{tanabe2007parameter} \\
%     $\hat\zeta_{\mathrm{bc}}$ & $\hat\kappa^2-1/(NA_n'(\hat\kappa))$ (delta-method) \\
%     $\hat\zeta_{\mathrm{hd}}$ & $(n\bar R)^2$, high-dim approx \cite{banerjee2005clustering} \\
%     $\hat\zeta_{\mathrm{lk}}$ & $((n-1)/(2(1-\bar R)))^2$, large-$\kappa$ approx \cite{watson1983statistics} \\
%     $\hat\zeta_{\mathrm{CS}}$ & $\hat\kappa_{\mathrm{CS}}^2$, project-derived Cox-Snell$^*$ \\
%     \hline
%     \multicolumn{2}{l}{\footnotesize $^*$Extends Best--Fisher
%       \cite{bestfisher1981} to $n\ge2$ via}\\
%     \multicolumn{2}{l}{\footnotesize $\hat\kappa_{\mathrm{CS}} \coloneqq  \hat\kappa - (1/N)\,
%       [A_n(\hat\kappa)/\hat\kappa+A_n(\hat\kappa)^3
%       -A_n(\hat\kappa)(1+(n-1)/\hat\kappa^2)]/(2A_n'(\hat\kappa))$.}\\
%     \hline
%   \end{tabular}
% \end{table}

\begin{table}[]
    \centering
 \setlength{\tabcolsep}{5pt}
  \small
  \begin{tabular}{llll}
  \hline
  Name & $N=20$ & $N=50$ & $N=100$\\
  \hline
MLE$^2$ & $0.397\pm1.014$ & $0.179\pm0.580$ & $0.081\pm0.345$ \\
Banerjee & $0.580\pm1.385$ & $0.157\pm0.555$ & $0.075\pm0.352$ \\
Sra & $0.461\pm1.085$ & $0.159\pm0.528$ & $0.069\pm0.371$ \\
Tanabe BC & $0.312\pm0.918$ & $0.138\pm0.546$ & $0.080\pm0.360$ \\
High-dim approx & $-0.053\pm0.470$ & $-0.146\pm0.305$ & $-0.180\pm0.215$ \\
Large-$\kappa$ approx & $0.084\pm0.688$ & $-0.079\pm0.324$ & $-0.137\pm0.200$ \\
$U_{A_n^2}$ & $0.127\pm0.929$ & $0.055\pm0.530$ & $0.026\pm0.355$ \\
U-stat ($M=5$) & $\mathbf{0.009\pm0.900}$ & $0.008\pm0.525$ & $-0.004\pm0.326$ \\
RU-stat ($M=50$, $B=1000$) & $0.031\pm0.938$ & $\mathbf{0.007\pm0.522}$ & $\mathbf{0.001\pm0.373}$ \\
\hline
    \end{tabular}
    \caption{Signed relative error for $n=2$, $\zeta=1$, together with one standard deviation over Monte Carlo runs.}
    \label{tab:results-n2}
\end{table}

\begin{figure}
    \centering
    \begin{tikzpicture}
    \begin{axis}[clean,
        table/col sep=comma,
        xmin = -0.1,
        xmax = 100.01,
        ymin = -1.01,
        ymax = 1.01,
        clip mode=individual,
        clip = true,
        title= {$n=2$, $\zeta=1$},
        ylabel = {Signed Relative Error},
        every axis plot/.append style={line width=2pt, mark size=3.5pt}]

    \addplot [mark=none, black] coordinates {(0.0, 0.0) (100, 0)};
    
    \addplot[mark=o,color=tolblue,mark options=solid] table [x=Ns, y=incomplete_K50n2z1.00, col sep=comma] {results_aap_df.csv};

    \addplot[mark=+,color=tolred,dashed,mark options=solid] table [x=Ns, y=u_stat_K5n2z1.00, col sep=comma] {results_aap_df.csv};

    \addplot[mark=x,color=tolyellow, dotted,mark options=solid] table [x=Ns, y=mlen2z1.00, col sep=comma] {results_aap_df.csv};

    \addplot[mark=diamond,color=tolpurple, dashdotted,mark options=solid] table [x=Ns, y=u_stat_an2n2z1.00, col sep=comma] {results_aap_df.csv};

     \addplot[mark=square,color=tolcyan, densely dashed,mark options=solid] table [x=Ns, y=high_dimn2z1.00, col sep=comma] {results_aap_df.csv};

    \addplot[mark=triangle,color=tolgreen, densely dotted,mark options=solid] table [x=Ns, y=tanabe_bcn2z1.00, col sep=comma] {results_aap_df.csv};
    
    \end{axis}
    \end{tikzpicture}%
    \begin{tikzpicture}
    \begin{axis}[clean,
        table/col sep=comma,
        xmin = -0.1,
        xmax = 100.01,
        ymin = -1.01,
        ymax = 1.01,
        clip mode=individual,
        clip = true,
        title= {$n=2$, $\zeta=100$},
        every axis plot/.append style={line width=2pt, mark size=3.5pt}]

    \addplot [mark=none, black] coordinates {(0.0, 0.0) (100, 0)};
    
    \addplot[mark=o,color=tolblue,mark options=solid] table [x=Ns, y=incomplete_K50n2z100.00, col sep=comma] {results_aap_df.csv};

    \addplot[mark=+,color=tolred,dashed,mark options=solid] table [x=Ns, y=u_stat_K5n2z100.00, col sep=comma] {results_aap_df.csv};

    \addplot[mark=x,color=tolyellow, dotted,mark options=solid] table [x=Ns, y=mlen2z100.00, col sep=comma] {results_aap_df.csv};

    \addplot[mark=diamond,color=tolpurple, dashdotted,mark options=solid] table [x=Ns, y=u_stat_an2n2z100.00, col sep=comma] {results_aap_df.csv};

     \addplot[mark=square,color=tolcyan, densely dashed,mark options=solid] table [x=Ns, y=high_dimn2z100.00, col sep=comma] {results_aap_df.csv};

    \addplot[mark=triangle,color=tolgreen, densely dotted,mark options=solid] table [x=Ns, y=tanabe_bcn2z100.00, col sep=comma] {results_aap_df.csv};
    
    \end{axis}
    \end{tikzpicture}

            \begin{tikzpicture}
    \begin{axis}[clean,
        table/col sep=comma,
        xmin = -0.1,
        xmax = 100.01,
        ymin = -1.01,
        ymax = 1.01,
        clip mode=individual,
        restrict y to domain=-100:100,
        clip = true,
        title= {$n=25$, $\zeta=1$},
        ylabel = {Signed Relative Error},
        every axis plot/.append style={line width=2pt, mark size=3.5pt}]

    \addplot [mark=none, black] coordinates {(0.0, 0.0) (100, 0)};
    
    \addplot[mark=o,color=tolblue,mark options=solid] table [x=Ns, y=incomplete_K50n25z1.00, col sep=comma] {results_aap_df.csv};

    \addplot[mark=+,color=tolred,dashed,mark options=solid] table [x=Ns, y=u_stat_K5n25z1.00, col sep=comma] {results_aap_df.csv};

    \addplot[mark=x,color=tolyellow, dotted,mark options=solid] table [x=Ns, y=mlen25z1.00, col sep=comma] {results_aap_df.csv};

    \addplot[mark=diamond,color=tolpurple, dashdotted,mark options=solid] table [x=Ns, y=u_stat_an2n100z1.00, col sep=comma] {results_aap_df.csv};

     \addplot[mark=square,color=tolcyan, densely dashed] table [x=Ns, y=high_dimn25z1.00, col sep=comma] {results_aap_df.csv};

    \addplot[mark=triangle,color=tolgreen, densely dotted,mark options=solid] table [x=Ns, y=tanabe_bcn25z1.00, col sep=comma] {results_aap_df.csv};
    
    \end{axis}
    \end{tikzpicture}%
    \begin{tikzpicture}
    \begin{axis}[clean,
        table/col sep=comma,
        xmin = -0.1,
        xmax = 100.01,
        ymin = -1.01,
        ymax = 1.01,
        clip mode=individual,
        clip = true,
        title= {$n=25$, $\zeta=100$},
        every axis plot/.append style={line width=2pt, mark size=3.5pt}]

    \addplot [mark=none, black] coordinates {(0.0, 0.0) (100, 0)};
    
    \addplot[mark=o,color=tolblue,mark options=solid] table [x=Ns, y=incomplete_K50n25z100.00, col sep=comma] {results_aap_df.csv};

    \addplot[mark=+,color=tolred,dashed,mark options=solid] table [x=Ns, y=u_stat_K5n25z100.00, col sep=comma] {results_aap_df.csv};

    \addplot[mark=x,color=tolyellow, dotted,mark options=solid] table [x=Ns, y=mlen25z100.00, col sep=comma] {results_aap_df.csv};

    \addplot[mark=diamond,color=tolpurple, dashdotted,mark options=solid] table [x=Ns, y=u_stat_an2n25z100.00, col sep=comma] {results_aap_df.csv};

     \addplot[mark=square,color=tolcyan, densely dashed,mark options=solid] table [x=Ns, y=high_dimn25z100.00, col sep=comma] {results_aap_df.csv};

    \addplot[mark=triangle,color=tolgreen, densely dotted,mark options=solid] table [x=Ns, y=tanabe_bcn25z100.00, col sep=comma] {results_aap_df.csv};
    
    \end{axis}
    \end{tikzpicture}

        \begin{tikzpicture}
    \begin{axis}[clean,
        table/col sep=comma,
        xmin = -0.1,
        xmax = 100.01,
        ymin = -1.01,
        ymax = 1.01,
        clip mode=individual,
        restrict y to domain=-100:100,
        clip = true,
        title= {$n=100$, $\zeta=25$},
        xlabel = {Sample size ($N$)},
        ylabel = {Signed Relative Error},
        every axis plot/.append style={line width=2pt, mark size=3.5pt}]

    \addplot [mark=none, black] coordinates {(0.0, 0.0) (100, 0)};
    
    \addplot[mark=o,color=tolblue,mark options=solid] table [x=Ns, y=incomplete_K50n100z25.00, col sep=comma] {results_aap_df.csv};

    \addplot[mark=+,color=tolred,dashed,mark options=solid] table [x=Ns, y=u_stat_K5n100z25.00, col sep=comma] {results_aap_df.csv};

    \addplot[mark=x,color=tolyellow, dotted,mark options=solid] table [x=Ns, y=mlen100z25.00, col sep=comma] {results_aap_df.csv};

    \addplot[mark=diamond,color=tolpurple, dashdotted,mark options=solid] table [x=Ns, y=u_stat_an2n100z25.00, col sep=comma] {results_aap_df.csv};

     \addplot[mark=square,color=tolcyan, densely dashed] table [x=Ns, y=high_dimn100z25.00, col sep=comma] {results_aap_df.csv};

    \addplot[mark=triangle,color=tolgreen, densely dotted,mark options=solid] table [x=Ns, y=tanabe_bcn100z25.00, col sep=comma] {results_aap_df.csv};
    
    \end{axis}
    \end{tikzpicture}%
    \begin{tikzpicture}
    \begin{axis}[clean,
        table/col sep=comma,
        xmin = -0.1,
        xmax = 100.01,
        ymin = -1.01,
        ymax = 1.01,
        clip mode=individual,
        clip = true,
        title= {$n=100$, $\zeta=100$},
        xlabel = {Sample size ($N$)},
        every axis plot/.append style={line width=2pt, mark size=3.5pt}]

    \addplot [mark=none, black] coordinates {(0.0, 0.0) (100, 0)};
    
    \addplot[mark=o,color=tolblue,mark options=solid] table [x=Ns, y=incomplete_K50n100z100.00, col sep=comma] {results_aap_df.csv};

    \addplot[mark=+,color=tolred,dashed,mark options=solid] table [x=Ns, y=u_stat_K5n100z100.00, col sep=comma] {results_aap_df.csv};

    \addplot[mark=x,color=tolyellow, dotted,mark options=solid] table [x=Ns, y=mlen100z100.00, col sep=comma] {results_aap_df.csv};

    \addplot[mark=diamond,color=tolpurple, dashdotted,mark options=solid] table [x=Ns, y=u_stat_an2n100z100.00, col sep=comma] {results_aap_df.csv};

     \addplot[mark=square,color=tolcyan, densely dashed,mark options=solid] table [x=Ns, y=high_dimn100z100.00, col sep=comma] {results_aap_df.csv};

    \addplot[mark=triangle,color=tolgreen, densely dotted,mark options=solid] table [x=Ns, y=tanabe_bcn100z100.00, col sep=comma] {results_aap_df.csv};
    
    \end{axis}
    \end{tikzpicture}

\begin{tikzpicture} 
    \begin{axis}[%
    hide axis,
    xmin=10,
    xmax=50,
    ymin=0,
    ymax=0.4,
    every axis plot/.append style={line width=2pt, mark size=3.5pt},
    legend style={draw=white!15!black,legend cell align=left, legend columns=3}
    ]
    \addlegendimage{tolblue,mark=o,mark options=solid}
    \addlegendentry{RU-Stat ($M=50$)};
    \addlegendimage{tolred,dashed,mark=+,mark options=solid}
    \addlegendentry{U-Stat ($M=5$)};
    \addlegendimage{tolyellow,dotted,mark=x,mark options=solid}
    \addlegendentry{MLE$^2$};
    \addlegendimage{tolpurple,dashdotted,mark=diamond,mark options=solid}
    \addlegendentry{$U_{A_n^2}$};
    \addlegendimage{tolcyan,densely dashed,mark=square,mark options=solid}
    \addlegendentry{High-dim approx};
    \addlegendimage{tolgreen,densely dotted,mark=triangle,mark options=solid}
    \addlegendentry{Tanabe BC};
    \end{axis}
\end{tikzpicture}
\caption{Comparison of the signed relative error for a selection of various intensity estimators over several dimension parameters $n$, intensities $\zeta$, and sample sizes $N$.}
\label{fig:synth-exp}
\end{figure}

We first look at estimating the intensity through completely synthetic experiments, where all the data are derived from synthetic simulation of the FvML distribution through a computational method analogous to that presented in~\cite{pinzon2023fast}.
As we have proven that unbiased estimation of concentration $\kappa$ is impossible, we look at a wide range of other estimators derived for the concentration $\kappa$, and square them to derive approximate estimators for the intensity $\zeta$. A table of such estimators is found in~\cref{tab:comparators}.

We perform a grid study on several choices of dimension, $n\in\{2, 25, 100\}$, several choices of intensity, $\zeta\in\{1, 25, 100\}$ and over sample sizes ranging from $N=10$ to $N=100$ with an interval of 10 in between.
For each experiment, $N$ iid samples are drawn from the FvML distribution with intensity $\zeta$, and the estimator computed.
The signed relative error is computed for each estimator, and the results averaged over $1000$ Monte Carlo runs.
For the U-statistic with exact partial sum computation, the partial sum is limited to $M=5$ as the partial sum is only computable up to order $5$ for $N=10$ samples.
For the U-statistic computed with random samples, $B=1000$ samples are taken either from permuted indices or from random indices, and the partial sum is limited to $M=50$, as that is the practical limit of numerical stability for some of the coefficients for higher dimensions. 
As we wish to both show the performance of the methods and their relative bias, we choose to report the signed relative error of all the estimators,
\begin{equation}
    \operatorname{SRE}(\hat\zeta,\zeta) = \frac{\hat\zeta - \zeta}{\zeta},
\end{equation}
where $\hat\zeta$ is an estimator of the intensity and $\zeta$ is the true intensity.
For sampling from the FvML distribution we use the method from~\cite{pinzon2023fast}.
A selection for some of the estimators for six of nine experiments can be seen in~\cref{fig:synth-exp}.

We first look at the most trivial nominal case of dimension $n=2$ and intensity $\zeta=1$. 
We report the results in~\cref{tab:results-n2}.
For $N=20$, the MLE has a mean bias of positive $40\%$, meaning that it significantly overestimates the true intensity.
Several other estimators fail significantly for this case.
The high dimensional approximation is the best completing estimator with a signed error of $-5\%$.
The $U_{A_n^2}$ estimator also performs well, as it is similar to our partial sum estimator with truncation $M=1$.
The best estimator for this sample size is the U-statistic estimator with $M=5$, with a signed error of less than $1\%$.

For the case $n=2$ and $\zeta=100$, the random U-statistic estimator with $M=50$ performs the best for small sample sizes, with many estimators achieving good asymptotic convergence.
The U-statistic with $M=5$ performs poorly, which signals that in this regime, significantly more than $M=5$ terms are required for convergence of the estimators.

For the case $n=25$ and $\zeta=1$, the only two estimators that have signed relative error in the range $[-1,1]$ are our U-statistic estimators, with even the $U_{A_n^2}$ estimator completely failing. This signals that in the high-dimensional, low intensity regime, the U-statistic estimators proposed herein are the only real choice for unbiased intensity estimation.

For the case $n=25$, and $\zeta=100$, three estimators, all based on U-statistics, all have virtually no relative error for all sample sizes. 
The partial sum U-statistic with $M=5$, surprisingly performs really well in this case, likely because the series converges faster as the dimension $n$ increases.
The MLE estimator, the Tanabe BC estimator and the high-dimensional approximation all perform decently well, but fail to produce unbiased results even for relatively high sample sizes, as is expected.

For the case $n=100$ and $\zeta=1$, all estimators performed poorly for the chosen sample sizes.
It is likely because the variance in the mean direction estimate depends highly on the concentration/intensity, with higher values leading to lower variance, thus leading to a more accurate estimator.

We therefore look at the case $n=100$ and $\zeta=25$. 
Again, only the U-statistic estimator perform well here, with both of the estimators proposed in this work performing very well in the undersampled case of $N\ll n$, which was the original motivation for this work.

Finally we look at the case $n=100$, and $\zeta=100$, with both high dimension and high intensity.
Again, only the U-statistic estimators perform well, with all three virtually indistinguishable from each other, except in the case of smallest sample size $N=10$, where the $U_{A_n^2}$ estimator performs the worst. 
Of note is the fact that high dimension approximation estimator makes a brief appearance for $N=100$, meaning that for high dimension, high intensity and high sample size, it indeed might be a good estimator, but only asymptotically so.

The only estimator that had consistent and approximately unbiased results over all attempted scenarios is that of the randomized U-statistic with $M=50$.
As estimating the concentration parameter $\kappa$ in an unbiased way was proven to be impossible in this work, this makes this intensity estimator the only viable estimator over all possible unknown scenarios in known to the authors.

We have additionally ran experiment for smaller $\zeta$, such as $\zeta = 0.01$, but the results were extremely unstable for all estimators, even for $n=2$.
It is likely the fact that estimating extremely small intensity requires large sample sizes for these estimators, or that estimators for extremely small intensity $\zeta$ have to be specially constructed.

%%%%%%%%%%%%%%%%%%%%%%%%%%%%%%%%%%%%%%%%%%%%%%%%%%%%%%%%
\section{New York Taxi Data}
\label{sec:taxi}
%%%%%%%%%%%%%%%%%%%%%%%%%%%%%%%%%%%%%%%%%%%%%%%%%%%%%%%%

\begin{table}[]
    \centering
    \setlength{\tabcolsep}{5pt}
  \small
  \begin{tabular}{lrrrr}
    \hline
    Zone Name & $N$ & $\hat\zeta_{50}$ & $\hat\zeta_{\mathrm{MLE}}$ & $\Delta$ \\
    \hline
Springfield Gardens South & 22 & 388.04 & 592.50 & +204.46\\
Baisley Park & 35 & 371.42 & 560.03 & +188.61\\
Bensonhurst East & 22 & 256.66 & 322.89 & +66.23\\
Dyker Heights & 23 & 215.42 & 251.82 & +36.40\\
South Jamaica & 25 & 15.24 & 34.28 & +19.03\\
Saint Michaels Cemetery/Woodside & 106 & 31.80 & 47.91 & +16.11\\
Westchester Village/Unionport & 53 & 8.07 & 16.63 & +8.55\\
Washington Heights North & 76 & 11.62 & 16.03 & +4.41\\
Brighton Beach & 11 & 13.73 & 17.30 & +3.57\\
Midwood & 10 & 8.00 & 11.20 & +3.19\\
\hline
\multicolumn{5}{l}{\footnotesize $\Delta = \hat\zeta_{\mathrm{MLE}}-\hat\zeta_{50}$.
    Zones with $N < 10$ are not shown.} \\
\hline
    \end{tabular}
    \caption{Ten of the highest intensity discrepancy taxi zones for the 12th of January 2012.}
    \label{tab:selected-zones}
\end{table}

% \pgfplotsset{colormap={mymap}{
% rgb=(0.99532488, 0.8947328, 0.75266436)
% rgb=(0.99215686, 0.75371011, 0.53883891)
% rgb=(0.96824298, 0.49142637, 0.32287582)
% rgb=(0.83981546, 0.18380623, 0.11870819)
% }}

\pgfplotsset{colormap={mymap}{
rgb=(0.99215686, 0.75371011, 0.53883891)
rgb=(0.99215686, 0.75371011, 0.53883891)
}}

% \pgfplotsset{colormap={mymap}{
%   rgb=(0.8,0.0,0.0)
%   rgb=(0.0,0.8,0.0)
%   rgb=(0.0,0.0,0.8)
% }}

\definecolor{internal1}{rgb}{0.83981546, 0.18380623, 0.11870819}

\begin{figure}
    \centering
    \begin{tikzpicture}
    \begin{axis}[clean,
        table/col sep=comma,
        xmin = 5,
        xmax = 30000.01,
        ymin = 0.0005,
        ymax = 1100.01,
        x = 1cm,
        y = 0.5cm,
        xmode=log,
        ymode=log,
        y filter/.code={\pgfmathparse{abs(#1)}}
        clip mode=individual,
        clip = true,
        domain=5:30000,
        samples=101,
        smooth,
        ylabel = {$\lvert \hat\zeta_{\text{MLE}}-\hat\zeta_{50}\rvert$},
        xlabel={Trip count $N$},
        every axis plot/.append style={line width=2pt, mark size=2.75pt}]

    \addplot[mark=none, dashed] {10^(-0.2738*log10(x)-0.0274)};

    \addplot[line width=1.25pt,black,only marks, mark options={fill=internal1}] table [x=Ns, y=zdiff, col sep=comma] {taxi_z.csv};

    \end{axis}
    \end{tikzpicture}
\caption{A representation of the MLE versus U-statistic discrepancy. The $x$-axis represents the zone trip count, with the $y$-axis being the discrepancy. Each point is data for one zone. The dashed line represents the line of best fit in logarithmic space.}\label{fig:taxi-sample-diff}
\end{figure}

\begin{figure}
  \centering
  \includegraphics[width=0.95\linewidth]{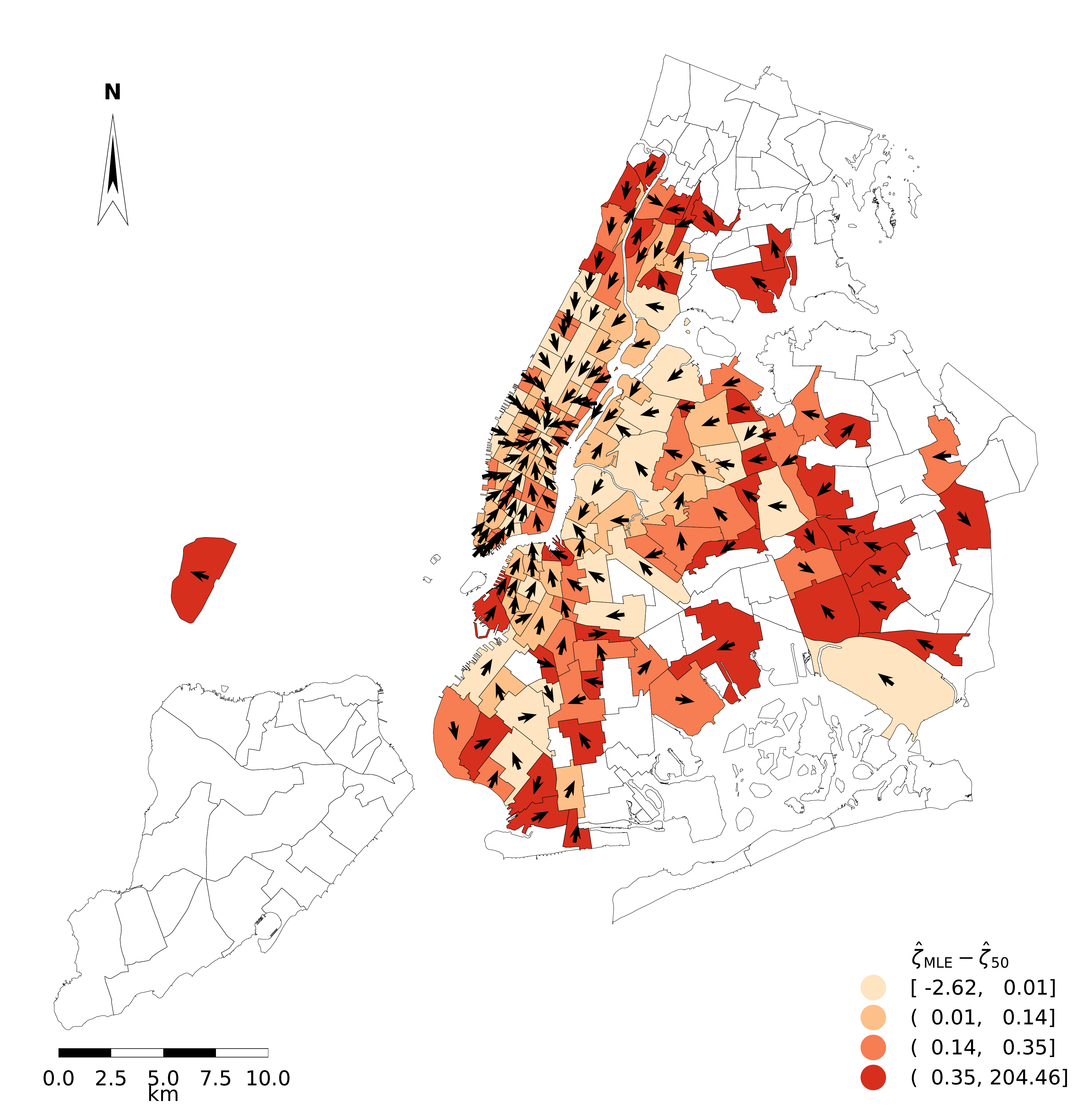}
  \caption{NYC taxi pickup zones colored by the discrepancy
    $\hat\zeta_{\mathrm{MLE}}-\hat\zeta_{50}$. Arrows indicate
    the outbound mean direction in each zone. The discrepancy between the MLE estimator and the partial sum U-statistic estimator is divided over four quantiles.}
  \label{fig:taxi-layout}
\end{figure}

New York City taxi (NYC) trip records are obtained from the New York City Taxi \& Limousine Commission (NYC TLC)\cite{nyctlc2012}. Detailed taxi trip records, including pick-up and drop-off coordinates, are available for each trip as a NYC city-level open data policy. However, after summer 2015, NYC TLC stopped sharing exact pick-up and drop-off coordinates to protect passengers' privacy. Instead, NYC TLC introduced taxi zones, which are developed based on neighborhood cultures in NYC. There are 263 taxi zones covering the five boroughs (Brooklyn, Bronx, Manhattan, Queens, and Staten Island) plus Newark Airport in Newark, New Jersey. 

In this study, we apply our estimator to taxi trip records for January 12th, 2012, a day of no significance. This date is randomly select based on two conditions. First, it is before the policy stopped exact location sharing in 2015. And second, there was no events occurring in NYC that significantly changed people's travel behavior \cite{jiang2024sensor}. We first filter out trip records that did not fall inside NYC boundary. After this step, a total of $478{,}884$ trips across $234$ taxi zones remain. In order to make a valid estimation, we further remove taxi zones with less than 10 trips from this analysis. This results in $159$ zones are used in this study. For each trip, we first identify the taxi zone where the trip starts using point-in-polygon method. Then, we replace the starting point with the centroid $(o_x,o_y)$ of the corresponding taxi-zone. We keep the exact trip destination in the original coordinates $(d_x,d_y)$ to ensure accurate directional estimations. We use EPSG:6538 as the projected coordinate system, a New York State projection for New York Long Island area, in the unit of meters. 

For each trip we then form the displacement vector,
\begin{equation}
    v_i = (d_{x,i} - o_x^{(z)},\; d_{y,i} - o_y^{(z)}) \in \R^2,
\end{equation}
and subsequently form dimensionless directional unit vectors,
\begin{equation}
    v_i \leftarrow \frac{v_i}{\norm{v_i}}.
\end{equation}
where the resulting unit vectors are points on the unit circle, and are treated as our data.
We then assume that the data are FvML distributed with $n=2$, which coincides with the von~Mises
distribution.
We discard all zones with less than $N=10$ trip counts, thus the zone trip counts range from $10$ to $21{,}275$, with a median of $281$.

Given the results in~\cref{sec:synthetic}, we make use of the randomized approach for computing the intensity U-statistic with $M=50$.
For each zone, we take all trips originating in that zone and compute $\hat\zeta_{50}$ and the MLE $\hat\zeta_{\mathrm{MLE}}$.
\Cref{tab:selected-zones} reports the trip counts, computed quantities, and their differences for a selection of ten highest discrepancy zones.
As can be seen, the largest discrepancies occur occur mostly in small trip count zones, with relatively high computed intensities.

\Cref{fig:taxi-sample-diff} is a visualization of the Trip counts $N$ versus discrepancies over all the zones. 
As can be seen, there is a slight negative correlation between trip count and discrepancy.
This is to be expected, as both methods should converge to each other in the limit of sample size.

A map \cref{fig:taxi-layout} maps the per-zone discrepancy $\hat\zeta_{\mathrm{MLE}}-\hat\zeta_{50}$ across the city along with each zone's mean outbound direction. 
As shown in \cref{fig:taxi-layout}, the dominant travel direction from taxi zones in Manhattan point points toward the Midtown Manhattan, which is the city's central business district. 
Important landmarks, such as the Empire State Building, Time Square, the headquarters of the United Nations and Rockefeller Center. 
In addition, Grand Central Terminal and Penn Station, the two important regional and local transportation hubs, are also located here. 
This pattern is consistent with previous studies that identified Midtown as one of the most active location for taxi activities \cite{jiang2021novel,jiang2025comparative}. 
The colors of \cref{fig:taxi-layout} show the discrepancies for each zone. 
The low discrepancies in high-volume zones, such as JFK airport, demonstrate that this method is consistent with intensity calculations with large samples sizes in taxi zones with large trip destination variances \cite{jiang2024entropy}.
The high discrepancies in low-volume zones, indicate that the MLE is likely poorly adapted to this use case.

\section{Descriptive Concentration in JoSE Embeddings}
\label{sec:jose}

\begin{table}[htbp]
  \centering
  \setlength{\tabcolsep}{5pt}
  \small
  \begin{tabular}{rrrrrl}
    \hline
    Cluster & $N$ & $\hat\zeta_3$ & $\hat\zeta_{\mathrm{MLE}}$ &
      $\Delta$ & Top words \\
    \hline
    14 &  446 & 9{,}256 & 11{,}442 & $+2{,}186$ & mezalimi, birinci, osmanli, harb \\
    13 & 3300 & 7{,}170 &  8{,}208 & $+1{,}038$ & 12, 11, ), 27, 19 (date tokens) \\
    11 & 1170 & 5{,}969 &  6{,}607 & $+638$ & fernandez, shortstop, smoltz, maddux \\
    17 & 2095 & 5{,}771 &  6{,}343 & $+572$ & c4u, 'l, 7u, 3o, w8 (encoding artefacts) \\
     4 & 2304 & 5{,}451 &  5{,}937 & $+486$ & 887, 1216, 1005, 881, cgy (hockey scores) \\
     7 & 1769 & 5{,}050 &  5{,}446 & $+396$ & 250mb, adaptors, opti, ne2000 \\
     2 & 1390 & 4{,}997 &  5{,}385 & $+388$ & sclerosis, estrogen, hormone \\
     1 & 1860 & 4{,}649 &  4{,}962 & $+313$ & pc, 3d, mac, digital, data \\
     9 & 1630 & 4{,}344 &  4{,}605 & $+261$ & auroral, spacecraft, orbiting, solar \\
    15 & 1609 & 4{,}145 &  4{,}374 & $+229$ & covenants, ecclesiastes, hebrews \\
     10 & 2004 & 4{,}067 &  4{,}281 & $+214$ & xlookupstring, gzip, xtappaddinput \\
    16 & 1600 & 3{,}966 &  4{,}168 & $+202$ & ascot, romeo, corvettes, beemer \\
    12 & 2335 & 3{,}933 &  4{,}127 & $+194$ & blinds, seam, poking, woodstove \\
    18 & 2628 & 3{,}839 &  4{,}019 & $+180$ & husc8, wraith, kmagnacca, uceng \\
    19 & 2919 & 3{,}756 &  3{,}924 & $+168$ & reactionary, seeming, enjoyable \\
     3 & 2759 & 3{,}739 &  3{,}905 & $+166$ & com, netcom, ileaf, nynexst, ssd \\
     6 & 3078 & 3{,}726 &  3{,}890 & $+164$ & homosexuality, issue, issues, case \\
    20 & 3455 & 3{,}643 &  3{,}796 & $+153$ & mserv, wkuvx1, anu, hogg, nermal \\
     8 & 1630 & 3{,}415 &  3{,}548 & $+133$ & khojaly, cypriot, unrest, cemetery \\
     5 & 2430 & 3{,}193 &  3{,}300 & $+107$ & disclose, sponsorship, congressmen \\
    \hline
\multicolumn{5}{l}{\footnotesize $\Delta = \hat\zeta_{\mathrm{MLE}}-\hat\zeta_{3}$.} \\
\hline
  \end{tabular}
  \caption{Partial-sum U-statistic $\hat\zeta_3$ and squared MLE reference
$\hat\zeta_{\mathrm{MLE}}$ and their discrepancy for all $20$ spherical $K$-means clusters of the JoSE/20\,Newsgroups vocabulary, sorted by $\hat\zeta_3$.  The sample size $N$ is cluster size, and  top words are nearest to the cluster mean direction by cosine similarity.}
  \label{tab:jose-kappa}
\end{table}

We move on to our final example, the Joint Spherical Embeddings(JoSE) method of Meng et al.~\cite{meng2019spherical}.
JoSE is a Word2Vec-style model that constrains word vectors to lie on the unit sphere and trains them under a margin-based cosine-similarity objective, so the learned representations live on $\Sph{n-1}$, for some arbitrary user-defined $n$.
Given the results in~\cref{sec:synthetic}, we make use of the deterministic approach for computing the intensity U-statistic with $M=3$.
Thus for this example we make use of the finite surrogate $\hat\zeta_3$ computed using the direct approach with a user chosen $n=100$.

The corpus we make use of is 20 Newsgroups~\cite{lang1995newsweeder} ($18\,846$ posts, 20 categories), with a $42\,411$-word vocabulary after lowercasing, stop-word removal, and a minimum count of 5.
Each word is represented as a 100-dimensional unit vector on $\Sph{99}$; the training for the 100-dimensional JoSE embeddings distributed with the original repository is used.

To obtain descriptive groups of vectors on the sphere, we apply spherical $K$-means~\cite{dhillon2001concept} to the vocabulary with $K=20$ clusters.
Cluster membership is assigned by the nearest centroid under cosine similarity.
Cluster sizes range from $446$ to $3\,455$ words (mean $2\,121$), well above the $N\ge6$ minimum required by $\hat\zeta_3$.
For each cluster we report $\hat\zeta_3$ alongside the intensity MLE $\hat\zeta_{\text{MLE}}$.

\Cref{tab:jose-kappa} reports $\hat\zeta_3$ and
$\hat\zeta_{\mathrm{MLE}}$ for each of the $20$ clusters, and the discrepancy between the two.  
The squared MLE is larger than $\hat\zeta_3$ in every cluster, with relative excess over $\hat\zeta_3$ ranging from $1.7\%$ (cluster~4) to $23.6\%$ (cluster~13).
Similar to the taxi data example, the gap is largest for the smallest, most tightly concentrated clusters.

Clusters with a high value of $\hat\zeta_3$ are descriptively associated with narrow-context vocabulary (Turkish tokens; baseball player names).
Clusters with a low value of $\hat\zeta_3$ contain words that appear across many contexts (general political and discourse vocabulary).

As the synthetic data experiments suggested, for a high dimension $n$ and high intensity $\zeta$, the U-statistic estimator very likely is computing close to the correct intensity, while the MLE is not.
When clustering is used to build density estimates of the data using the FvML distribution, the partial sum U-statistic estimator is the only consistent and unbiased choice.

\section{Conclusion}
\label{sec:conclusion}

In this work we have established two complementary results about the Fisher-von~Mises-Langevin distribution.
First, that no unbiased estimator of the concentration parameter $\kappa$ exists for any $n\ge1$, $N\ge1$.
Second, that, what we call the intensity, namely the squared concentration $\zeta=\kappa^2$ does admit an estimator that is unbiased for an arbitrarily accurate polynomial approximation.
Its construction combines coefficient reversion of the $A_n$ power series with U-statistics for products of inner products of FvML observations.
We provide a way to approximate this U-statistic for arbitrary sums using a randomized approach, and empirically show that this estimator is superior to all others tested for most practical cases.

Several open questions remain:
First, does there exist an estimator $\hat\zeta$ that is unbiased for the full infinite series?
The coefficient reversion approach generalizes, meaning $\kappa^{2\ell}$ has a convergent series in powers of $A_n(\kappa)^2$ for arbitrary $\ell$, but does the same U-statistic
construction apply?
Does this estimator achieve the Cram\'er--Rao bound?

Future work will be concerned with extending the results for other distributions such as the Watson distribution~\cite{sra2013multivariate}, to spherical kernel density estimation~\cite{ley2017modern, pewsey2021recent} and to pose estimation~\cite{zanetti2009multiplicative, zanetti2018fully,lu2026pose}.

\section*{Acknowledgments}
Large language models, Perplexity.ai and Anthropic Claude were used for improving language and clarity during the editing process.
The authors assume responsibility for all content.

\bibliographystyle{siamplain}
\bibliography{refaap}

\end{document}